\documentclass[12pt,a4paper]{amsart}
\usepackage{amsfonts}
\usepackage[top=35mm, bottom=35mm, left=30mm, right=30mm]{geometry}
\usepackage[colorlinks=true,citecolor=blue]{hyperref}
\usepackage{mathptmx}
\usepackage{eucal}
\usepackage{graphicx}
\usepackage{mathrsfs}
\usepackage{amssymb}
\usepackage{amsmath}
\usepackage{amsthm}
\usepackage{tikz,float}

\usepackage{xcolor}
\newtheorem{thm}{Theorem}[section]
\newtheorem{cor}[thm]{Corollary}
\newtheorem{lem}[thm]{Lemma}
\newtheorem{prop}[thm]{Proposition}

\theoremstyle{definition}
\newtheorem{defn}[thm]{Definition}
\newtheorem{exam}[thm]{Example}
\newtheorem{rem}[thm]{Remark}
\numberwithin{equation}{section}
\newtheorem{clm}[thm]{Claim}

\newcommand{\N}{\mathbb{N}}

\DeclareMathOperator{\diam}{diam}

\begin{document}
\title[Equi-invariability, bounded invariance complexity and L-stability]{Equi-invariability, bounded invariance complexity and L-stability for control systems}
\author{Xing-fu Zhong}
\address{School of Mathematics and Statistics, Guangdong University of Foreign Studies\\
Guangzhou, 510006 P. R. China}
\email{xfzhong@gdufs.edu.cn}

\author{Zhi-jing Chen}
\address{School of Mathematics and Systems Science,
	Guangdong Polytechnic Normal University,
	Guangzhou 510665, P.R. China}
\email{chzhjing@mail2.sysu.edu.cn}

\author{Yu Huang}
\address{School of Mathematics and Computational Science, Sun Yat-sen University,
GuangZhou 510275, P. R. China}
\email{stshyu@mail.sysu.edu.cn}


\subjclass[2010]{37B05; 93C55; 93D09}
\keywords{Equi-invariability, Invariance complexity, Dichotomy theorem, Control set, Invariance entropy}

\begin{abstract}
In the paper we introduce the notions of bounded invariance complexity, bounded invariance complexity in the mean and mean L-stability for control systems. Then we characterize these notions by introducing six types of equi-invariability. As by product, two new dichotomy theorems for control system on control sets are established.
\end{abstract}


\maketitle

\section{Introduction}
In this paper, we mainly consider a discrete-time control system on a metric space $X$ of the following form
\begin{equation}\label{eq:control-system-modle}
x_{n+1}=F(x_n,u_n):=F_{u_n}(x_n),~n\in\mathbb{N}_0=\{0,1,\ldots\},
\end{equation}
where $F$ is a map from  $X\times U$ to $X$, $U$ is a compact set, and $F_u(\cdot)\equiv F(\cdot, u)$ is continuous for every $u\in U$. Given a control sequence $\omega=(\omega_0,\omega_1,\ldots)$ in $U$, the solution of~\eqref{eq:control-system-modle} can be written as
\[\phi(k,x,\omega)=F_{\omega_{k-1}}\circ\cdots\circ F_{\omega_0}(x).\]
For convenience, we denote system~\eqref{eq:control-system-modle} by $\Sigma=(\mathbb{N}_0,X,U,\mathscr{U},\phi)$, where $\mathscr{U}=U^{\mathbb{N}_0}$. Furthermore, we assume that $\phi:\mathbb{N}_0\times X\times\mathscr{U}\to X$ is continuous.

Invariance entropy introduced by Colonius and Kawan~\cite{Colonius-Kawan2009} as well as topological feedback entropy introduced by Nair et al.~\cite{Nair2004Topological} characterizes the minimal data rate for making a subset of the state space invariant. It is a very useful invariant to describe the exponential growth rate of the minimal number of different control functions sufficient for orbits to stay in a given set when starting in a subset of this set. For controlled invariant sets with zero invariance entropy, it is useful to consider the invariance complexity function first studied by Wang, Huang and Chen~\cite{Wang2019dichotomy}, which is an analogue in topological dynamical systems (see~\cite{Huang2018Bounded-complexity} and the references therein). We refer the readers to~\cite{Colonius2018Invariance,Colonius2018Metric,Colonius2018PressureB,
Colonius2013Invariance,Colonius-Kawan2009,Colonius2013A,
Colonius2019Bounds,Colonius2019Controllability,Colonius2018PressureA,Huang2018Caratheodory,
Kawan2009A,Kawan2018Invariance,Silva2015Invariance,Silva2013,Silva2014Outer,Wang2019MeasureInvariance,
Zhong2018Invariance,Zhong2020Variational} for more details about invariance entropy.

In 1993, Colonius and Kliemann~\cite{Colonius1993Some} introduced a notion of control set and obtained a beautiful result that control sets of a given control system coincide with maximal topologically mixing (transitive) sets of the control flow induced by the control system under some assumptions. We refer the readers to~\cite{Colonius1993Some,Colonius2000The} for more connections between control properties for control systems and basic notions for dynamical systems. Recently, the authors in~\cite{Wang2019dichotomy} introduced notion of equi-invariability and showed that an equi-invariant compact set has bounded invariance complexity and the converse is not true in general. In particular, they established a dichotomy theorem that a control set with dense interior is either equi-invariant or unstable.

In this paper, we introduce six types of equi-invariability, which are the analogy to equi-continuity, equi-continuity in the mean, and mean equi-continuity in topological dynamical systems (see~\cite{Huang2018Bounded-complexity,Li2015Mean,Qiu2018A}). Then we discuss the relationship with each other. In particular, we use the equi-invariability to characterize the bounded invariance complexity, the bounded invariance complexity in the mean and the mean L-stability for control systems. As by product, we obtain two new dichotomy theorems for a control set with dense interior.

This paper is organized as follows.
 In Section~\ref{sec:Equi-invariability},
 we first introduce six types of equi-invariability and discuss the relationship with each other. Then we characterize bounded invariance complexity, the bounded invariance complexity in the mean and the mean L-stability by equi-invariability. In Section~\ref{sec:Dichotomy-theorem}, we obtain two new  dichotomy theorems for control systems on control sets. All counter-examples are
 given in Appendix.

\section{Finite equi-invariability and invariance complexity}\label{sec:Equi-invariability}
Consider a control system $\Sigma=(\mathbb{N}_0,X,U,\mathscr{U},\phi)$, where $X$ is a metric space with a metric $d$. Recall that a subset
$Q$ of $X$ is said to be \emph{controlled invariant} if for any $x\in Q$, there exists a control $\omega_x\in\mathscr{U}$ such that
$\phi(\mathbb{N}_0,x,\omega_x)\subset Q$. Our task is to keep a controlled invariant set $Q$ invariant. It is almost impossible to realize this
task in practices if the choice of the control $\omega_x$ is sensitive to $x\in Q$ because of the error caused in implementation. On the other hand, we can control such a point $x\in Q$ if the associated control $\omega_x$ keeps not only the orbit of $x$ but also the orbits starting from some neighborhood of $x$ in the nearby of $Q$. Such a point is called a {\it equi-invariant point} of $Q$ in~\cite{Wang2019dichotomy}.
From the view point of control theory, the equi-invariant point $x$ of $Q$ means that $x$ can be stabilized robustly to
any neighborhood of $Q$.

In this section we will introduce six types of equi-invariability and their related to bound invariance complexity for a control system.
\begin{defn}
Let $\Sigma=(\mathbb{N}_0,X,U,\mathscr{U},\phi)$ be a system, $Q\subset X$ be a nonempty set and $x\in Q$.
\begin{itemize}
\item[(1)] $x$ is called a \emph{finitely equi-invariant point} of $Q$, write $x\in \mbox{FEI}(Q)$, if for every $\varepsilon>0$, there exist $\delta>0$ and a finite set $F\subset\mathscr{U}$ such that for every $y\in B(x,\delta)\cap Q$ there exists $\omega\in F$ with
    \begin{equation}\label{eq: equi-invaraint}
      \phi(\mathbb{N}_0,y,\omega)\subset B_\varepsilon(Q).
    \end{equation}
   $Q$ is called a \emph{finitely equi-invariant set} if $\mbox{FEI}(Q)=Q$.
\item[(2)] $x$ is called a \emph{finitely equi-invariant point in the mean} of $Q$, write $x\in \mbox{FEIM}(Q)$, if $x$ satisfies item~(1) where
   the equation~(\ref{eq: equi-invaraint}) is replaced by
    \begin{equation}\label{eq: equi invariant in mean}
     \frac1{n}\sum_{i=0}^{n-1}d[\phi(i,y,\omega),Q]<\varepsilon,~\forall~n\in\mathbb{N}.
    \end{equation}
    $Q$ is called a \emph{finitely equi-invariant set in the mean} if $\mbox{FEIM}(Q)=Q$.
\item[(3)] $x$ is called a \emph{finitely mean equi-invariant point} of $Q$, write $x\in \mbox{FMEI}(Q)$, if $x$ satisfies item~(1) where the equation~(\ref{eq: equi-invaraint}) is replaced by
    \begin{equation}\label{eq: mean equi invariant}
     \limsup_{n\to\infty}\frac1{n}\sum_{i=0}^{n-1}d(\phi(i,y,\omega),Q)<\varepsilon.
    \end{equation}
    $Q$ is called a \emph{finitely mean equi-invariant set} if $\mbox{FMEI}(Q)=Q$.
\end{itemize}
 Furthermore, $x$ is said to be an \emph{equi-invariant point } of $Q$, an \emph{equi-invariant point in the mean} of $Q$ and
 a \emph{mean equi-invariant point} of $Q$ if $x$ satisfies (1)-(3), respectively, with the set $F$ being singleton. We write
 $x\in \mbox{EI}(Q)$, $x\in \mbox{EIM}(Q)$ and $x\in \mbox{MEI}(Q)$, respectively.
\end{defn}

\begin{rem}
\begin{itemize}
 \item[(i)] The concept of equi-invariance was introduced in~\cite{Wang2019dichotomy}.
 \item[(ii)] From the view point of control theory, $x\in \mbox{EI}(Q)$ means that $x$ can be stabilized robustly to
             any neighborhood of $Q$; $x\in \mbox{EIM}(Q)$ means that $x$ can be stabilized robustly to
             any neighborhood of $Q$ in the mean; $x\in \mbox{MEI}(Q)$ means that $x$ can be stabilized robustly to
             any neighborhood of $Q$ eventually in the mean.
 \item[(iii)] There are the following implication relations among these six types of equi-invariability (Figure~1).
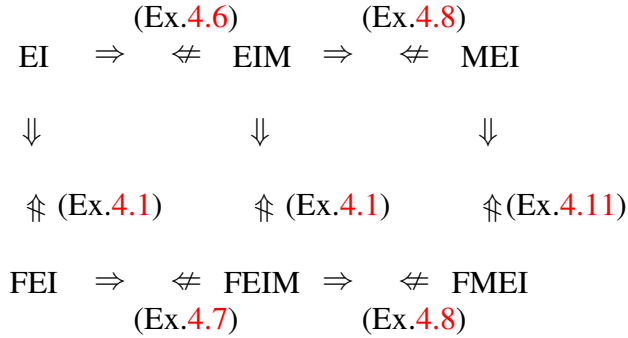
\begin{figure}[H]\label{fig:1}
  \centering
  \begin{tikzpicture}
\node at (0,0) {EI};
\node at (3,0) {EIM};
\node at (6,0) {MEI};
\node at (0,-3) {FEI};
\node at (3,-3) {FEIM};
\node at (6,-3) {FMEI};
\node at (1,0) {$\Rightarrow$};
\node at (2,0.5) {(Ex.\ref{Exa:EIM-n-EI})};
\node at (2,0) {$\nLeftarrow$};
\node at (4,0) {$\Rightarrow$};
\node at (5,0.5) {(Ex.\ref{Exa:MEI-n-FEIM})};
\node at (5,0) {$\nLeftarrow$};
\node at (1,-3) {$\Rightarrow$};
\node at (2,-3.5) {(Ex.\ref{Exa:EIM-n-FEI})};
\node at (2,-3) {$\nLeftarrow$};
\node at (4,-3) {$\Rightarrow$};
\node at (5,-3.5) {(Ex.\ref{Exa:MEI-n-FEIM})};
\node at (5,-3) {$\nLeftarrow$};
\node at (1,-2) {(Ex.\ref{Exa:FEI-n-EI})};
\node[rotate=-90] at (0,-2) {$\nLeftarrow$};
\node[rotate=90] at (0,-1) {$\Leftarrow$};
\node at (4,-2) {(Ex.\ref{Exa:FEI-n-EI})};
\node[rotate=-90] at (3,-2) {$\nLeftarrow$};
\node[rotate=90] at (3,-1) {$\Leftarrow$};
\node at (7,-2) {(Ex.\ref{Exa:FMEI-n-MEI})};
\node[rotate=-90] at (6,-2) {$\nLeftarrow$};
\node[rotate=90] at (6,-1) {$\Leftarrow$};
\end{tikzpicture}
  \caption{Six types of equi-invariability}
\end{figure}

We give examples in Appendix to show the above seven ``$\nRightarrow$" relations are possible.
We establish in the next section the conditions under which EI$\Leftrightarrow$FEI, EIM$\Leftrightarrow$FEIM and
MEI$\Leftrightarrow$FMEI, respectively. See Corollaries~\ref{cor3.3}, \ref{cor3.9} and \ref{cor3.15}.
\end{itemize}
\end{rem}

Now let us discuss the relations between equi-invariability and the control complexity. It is well known that invariance entropy introduced by Colonius and Kawan~\cite{Colonius-Kawan2009} as well as topological feedback entropy introduced by Nair et al.~\cite{Nair2004Topological} characterizes the minimal data rate for making a subset of the state space invariant. It is a very useful invariant to describe the exponential growth rate of the minimal number of different control functions sufficient for orbits to stay in a given set when starting in a subset of this set.

 For a control system $\Sigma=(\mathbb{N}_0,X,U,\mathscr{U},\phi)$. Let a subset $Q\subset X$ be controlled invariant. For $\omega\in\mathscr{U}$, $n\in\mathbb{N}$, and $\varepsilon>0$, define
\[Q_{n,\omega}^\varepsilon=\{x\in Q:\phi([0,n),x,\omega)\subset B_\varepsilon(Q)\}.\]
A subset $F\subset\mathscr{U}$ is called \emph{$(n,\varepsilon,Q)$-spanning set} if
\[Q=\cup_{\omega\in F}Q_{n,\omega}^\varepsilon.\]
Let
\[r_{inv}(n,\varepsilon,Q)=\inf\{\sharp F: F~\text{is an}~(n,\varepsilon,Q)\text{-spanning set}\},\]
where $\sharp F$ denotes the cardinality of $F$.

Recall that the \emph{outer invariance entropy} of $Q$ is defined by
\begin{equation}\label{eq:out entropy}
h_{inv,out}(Q):=\lim_{\varepsilon\to0}\limsup_{n\to\infty}\frac{\log r_{inv}(n,\varepsilon,Q)}{n}.
\end{equation}
See the monograph~\cite{Kawan2013} for more details on invariance entropy.

If a control invariant set $Q$ has positive out invariance entropy, that is, $h_{inv,out}(Q)>0$, then the numbers of controls
needed to keep $Q$ in any neighborhood of $Q$ in $[0, n]$ grow exponentially with respect to $n$. Thus it is more difficult to realize
such control task in this case. On the contrary, such control task is simple if $h_{inv,out}(Q)=0$. A particular case of
$h_{inv,out}(Q)=0$ is the following.

\begin{defn}(\cite[Definition 3.3]{Wang2019dichotomy})
We say that $Q$ has \emph{bounded invariance complexity} if for any $\varepsilon>0$, there exists $C:=C(\varepsilon)>0$ such that $r_{inv}(n,\varepsilon,Q)\leq C$ for all $n\in\mathbb{N}$.
\end{defn}

It is easy to see that if $Q$ has bounded invariance complexity then its outer invariance entropy is zero.
For some $A\subset Q$, we denote the closure of $A$ in $Q$ with respect to the subspace
topology on $Q$ by $cl_Q A$.

\begin{prop}(\cite[Proposition~3.5]{Wang2019dichotomy})
Let $\Sigma=(\mathbb{N}_0,X,U,\mathscr{U},\phi)$ be a control system and
$Q$ be a compact subset of $X$. Then
\begin{itemize}
\item[(i)] If $Q$ is equi-invariant, then $Q$ has bounded topological invariant
complexity.
\item[(ii)] Conversely, if $U$ is a compact metrizable space and $\phi: \mathbb{N}_0\times X\times\mathscr{U}\rightarrow X$
 is continuous, then the bounded topological invariant complexity of $Q$ implies that $cl_Q \mbox{EI}(Q) = Q$.
\end{itemize}
\end{prop}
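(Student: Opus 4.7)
For part (i), the plan is to fix $\varepsilon>0$ and use the equi-invariance at each point of $Q$ together with compactness to produce a single finite uniform spanning set. Given $x\in Q=\mbox{EI}(Q)$, equi-invariance supplies $\delta_x>0$ and $\omega_x\in\mathscr{U}$ with $\phi(\mathbb{N}_0,B(x,\delta_x)\cap Q,\omega_x)\subset B_\varepsilon(Q)$. The balls $\{B(x,\delta_x):x\in Q\}$ cover $Q$; a finite subcover $B(x_1,\delta_{x_1}),\ldots,B(x_k,\delta_{x_k})$ yields $F=\{\omega_{x_1},\ldots,\omega_{x_k}\}$ that is $(n,\varepsilon,Q)$-spanning for every $n\in\mathbb{N}$, whence $r_{inv}(n,\varepsilon,Q)\le k=k(\varepsilon)$. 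This direction is routine: compactness of $Q$ does all the work.

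For part (ii), the plan is to combine compactness of $\mathscr{U}$, continuity of $\phi$, and Baire category in $Q$. Fix $\varepsilon>0$ and let $C(\varepsilon)$ bound the spanning number. For each $n\in\mathbb{N}$ pick an $(n,\varepsilon,Q)$-spanning set $F_n=\{\omega_{n,1},\ldots,\omega_{n,m_n}\}$ of cardinality $m_n\le C(\varepsilon)$; pass to a subsequence so that $m_n=m$ is constant. Since $\mathscr{U}=U^{\mathbb{N}_0}$ is compact metrizable, a diagonal extraction yields $\omega_{n,j}\to\omega_j^\varepsilon\in\mathscr{U}$ for each $j=1,\ldots,m$. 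Define
\[
D_j^\varepsilon:=\{x\in Q:\phi(\mathbb{N}_0,x,\omega_j^\varepsilon)\subset\overline{B_\varepsilon(Q)}\},
\]
which is closed in $Q$ by continuity of $\phi$. The key intermediate step is to verify $Q=\bigcup_{j=1}^m D_j^\varepsilon$: any $x\in Q$ lies in $Q_{n,\omega_{n,j(n)}}^\varepsilon$ for some $j(n)$; after pigeonholing to a subsequence on which $j(n)=j^\ast$ is constant, continuity lets each fixed time $i$ pass through $n\to\infty$ to place $\phi(i,x,\omega_{j^\ast}^\varepsilon)\in\overline{B_\varepsilon(Q)}$, so $x\in D_{j^\ast}^\varepsilon$.

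The remaining step is Baire category. Since $Q$ is a compact metric space and the finitely many closed sets $D_j^\varepsilon$ cover it, the union $\bigcup_j\partial_Q D_j^\varepsilon$ is nowhere dense in $Q$, so $G_\varepsilon:=\bigcup_j\mbox{int}_Q D_j^\varepsilon$ is open and dense in $Q$. Each $x\in G_\varepsilon$ lies in $\mbox{int}_Q D_j^\varepsilon$ for some $j$ and hence admits $\delta>0$ with $B(x,\delta)\cap Q\subset D_j^\varepsilon$, giving $\phi(\mathbb{N}_0,B(x,\delta)\cap Q,\omega_j^\varepsilon)\subset\overline{B_\varepsilon(Q)}\subset B_{2\varepsilon}(Q)$; this certifies the equi-invariance condition at $x$ with parameter $2\varepsilon$. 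Applying the construction with $\varepsilon=1/n$ and invoking Baire once more, $\bigcap_n G_{1/n}$ is dense in $Q$, and every point of this intersection is equi-invariant for every prescribed tolerance (choose $n$ with $2/n<\varepsilon$); so $\mbox{EI}(Q)\supset\bigcap_n G_{1/n}$ is dense in $Q$ and $cl_Q\mbox{EI}(Q)=Q$.

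The main obstacle will be the limit step in (ii): the spanning sets assert inclusion in the \emph{open} tube $B_\varepsilon(Q)$ over the finite window $[0,n)$, but the limit control $\omega_j^\varepsilon$ can only be guaranteed to preserve inclusion in the \emph{closed} tube $\overline{B_\varepsilon(Q)}$ over all of $\mathbb{N}_0$. This loss of openness is what forces the $\varepsilon\to 2\varepsilon$ widening and, with it, the countable Baire intersection over $\varepsilon=1/n$ rather than producing equi-invariant points at a single prescribed scale.
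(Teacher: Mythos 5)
Your proof is correct, and it follows essentially the same route the paper uses for this circle of results (the proposition itself is quoted from Wang--Huang--Chen without proof, but the technique matches Theorem~\ref{thm:BIC-iff-FEI} combined with the Baire-category argument of Corollaries~\ref{cor3.9} and~\ref{cor3.15}): compactness in (i), and in (ii) extraction of limit controls by compactness of $\mathscr{U}$, a finite cover of $Q$ by the closed sets $D_j^{\varepsilon}$, and density of their interiors plus a countable intersection over $\varepsilon=1/n$ to get $cl_Q\,\mbox{EI}(Q)=Q$. The only cosmetic differences are that you use a diagonal argument in $U^{\mathbb{N}_0}$ rather than convergence in the hyperspace $2^{\mathscr{U}}$, and nowhere-density of boundaries of closed sets in place of Lemma~3.4 of the cited paper; your handling of the open-versus-closed tube (the $\varepsilon\to2\varepsilon$ widening) is exactly the loss the paper also absorbs.
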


\begin{thm}\label{thm:BIC-iff-FEI}
Let $\Sigma=(\mathbb{N}_0,X,U,\mathscr{U},\phi)$ be a system and $Q\subset X$ be a nonempty compact set. Then $Q$ is finitely equi-invariant if and only if $Q$ has bounded invariance complexity.
\end{thm}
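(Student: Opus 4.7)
The plan is to prove the two implications separately. The forward direction, finite equi-invariance $\Rightarrow$ bounded invariance complexity, is a straightforward compactness-cover argument: for each $x\in Q$ and each $\varepsilon>0$ one has a ball $B(x,\delta_x)$ and a finite set $F_x\subset\mathscr{U}$ witnessing $x\in\mbox{FEI}(Q)$; since $Q$ is compact, finitely many of these balls $B(x_1,\delta_{x_1}),\ldots,B(x_m,\delta_{x_m})$ cover $Q$, and then $F:=\bigcup_{i=1}^m F_{x_i}$ is a finite subset of $\mathscr{U}$ such that every $y\in Q$ has some $\omega\in F$ with $\phi(\mathbb{N}_0,y,\omega)\subset B_\varepsilon(Q)$. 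Hence $F$ is an $(n,\varepsilon,Q)$-spanning set for every $n\in\mathbb{N}$, so $r_{inv}(n,\varepsilon,Q)\le|F|$ uniformly in $n$.

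The converse is the substantive direction: given finite spanning sets of bounded size for each finite time horizon, I must produce a \emph{single} finite family that stabilises $Q$ for all of $\mathbb{N}_0$. Fix $\varepsilon>0$ and set $C:=C(\varepsilon/2)$. For each $n\in\mathbb{N}$ choose an $(n,\varepsilon/2,Q)$-spanning set $F_n=\{\omega_1^{(n)},\ldots,\omega_C^{(n)}\}$ (padding the cardinality to exactly $C$ if necessary). Since $U$ is compact metrizable, so is $\mathscr{U}=U^{\mathbb{N}_0}$ in the product topology, and a diagonal extraction yields a subsequence $n_k\to\infty$ along which $\omega_j^{(n_k)}\to\omega_j^{*}$ in $\mathscr{U}$ for every $j=1,\ldots,C$. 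Define $F:=\{\omega_1^{*},\ldots,\omega_C^{*}\}$.

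I would then verify that $F$ serves every $y\in Q$, which in particular shows that every $x\in Q$ is a finitely equi-invariant point (with any $\delta>0$). Given $y\in Q$, for each $k$ pick $j(k)\in\{1,\ldots,C\}$ with $\phi([0,n_k),y,\omega_{j(k)}^{(n_k)})\subset B_{\varepsilon/2}(Q)$; by pigeonhole some value $j^{*}$ is attained on an infinite subset $K\subset\mathbb{N}$. For every fixed $m\in\mathbb{N}_0$ and every $k\in K$ large enough that $n_k>m$, one has $\phi(m,y,\omega_{j^{*}}^{(n_k)})\in B_{\varepsilon/2}(Q)$; continuity of $\phi$ in its control variable lets me send $k\to\infty$ along $K$ and conclude $\phi(m,y,\omega_{j^{*}}^{*})\in\overline{B_{\varepsilon/2}(Q)}\subset B_\varepsilon(Q)$. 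Since $m$ is arbitrary, $\phi(\mathbb{N}_0,y,\omega_{j^{*}}^{*})\subset B_\varepsilon(Q)$.

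The main obstacle is precisely this converse: the quantifier order ``for all $n$'' in bounded invariance complexity does not \emph{a priori} give a control that works for all $n$ simultaneously. The remedy has two ingredients that must be carefully deployed together: compactness (and, implicitly, metrizability) of $\mathscr{U}$ to pass to limits of controls, and the loss of an open-to-closed neighbourhood when taking a limit, which is absorbed by degrading $\varepsilon$ to $\varepsilon/2$ at the outset. Everything else, including the pigeonhole selection of a limit control for each $y$, is routine once this framework is set up.
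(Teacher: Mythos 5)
Your proposal is correct and follows essentially the same route as the paper: a compactness--cover argument for the forward implication, and for the converse a passage to a limiting finite family of controls of cardinality at most $C$, followed by a pigeonhole selection for each point and a continuity argument, with the loss from open to closed $\varepsilon/2$-neighbourhoods absorbed by shrinking $\varepsilon$ at the outset. The only (immaterial) difference is bookkeeping: you index the controls and extract a diagonal subsequence in $\mathscr{U}^{C}$, whereas the paper extracts a convergent subsequence of the sets $F_n$ in the hyperspace $2^{\mathscr{U}}$ and works with $\varepsilon/3$ instead of $\varepsilon/2$.
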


\begin{proof}
($\Rightarrow$) Suppose that $Q$ is finitely equi-invariant.
Then for any $\varepsilon>0$ and $x\in Q$, there are $\delta_x>0$ and $F_x\subset\mathscr{U}$ such that for every $y\in B(x,\delta_x)\cap Q$ there exists $\omega_y\in F_x$ such that $\phi(\mathbb{N}_0,y,\omega_y)\subset B_\varepsilon(Q)$.
Since $Q$ is compact, we can find a finite open cover $\mathcal {C}:=\{B(x_i,\delta_{x_i}),i=1,\ldots,p\}$ of $Q$. Let $F=\cup_{i=1}^p\{F_{x_i}\}$. Then $F$ is finite and is an $(n,\varepsilon,Q)$-spanning set for every $n\in\mathbb{N}$. Hence $Q$ has bounded invariance complexity.


($\Leftarrow$)
Given $\varepsilon>0$, there exists $C$ such that $r_{inv}(n,\frac{\varepsilon}{3},Q)\leq C$ for all $n\in\mathbb{N}$; that is, for any $n\in\mathbb{N}$ there exists $F_n\subset\mathscr{U}$ such that $\sharp F_n\leq C$ and $Q=\cup_{\omega\in F_n}Q_{n,\omega}^{\varepsilon/3}$. By the compactness of $2^\mathscr{U}$(the hyperspace of $\mathscr{U}$~\cite{Nadler1978Hyperspaces}), we can pick a convergent subsequence $\{F_{n_i}\}$ in $\{F_n\}$. We denote its limit as $F$; that is $\lim\limits_{i\to\infty} F_{n_i}=F$. Therefore, we have $\sharp F\leq C$ by the fact that $\{A\in2^\mathscr{U}:\sharp A\leq C\}$ is closed. For every $i\in\mathbb{N}$ and any $x\in Q$ there exists $\omega_{n_i}\in F_{n_i}$ such that
\[\phi([0,n_i),x,\omega_{n_i})\subset B_{\varepsilon/3}(Q).\]
Thus we get
\[\phi([0,n_i),x,\omega_{n_j})\subset B_{\varepsilon/3}(Q)\]
for any $j\geq i$. Suppose that $\omega_{n_i}\to\omega$. Then $\omega\in F$. Letting $j\to\infty$, we have, by the continuity of $\phi$,
\[\phi([0,n_i),x,\omega)\subset B_{\varepsilon/2}(Q).\]
Since $n_i\to\infty$ as $i\to\infty$, we obtain $\phi(\mathbb{N}_0,x,\omega)\subset B_{\varepsilon/2}(Q)$. This implies that
\[Q\subset\bigcup_{\omega\in F}\bigcap_{n=1}^\infty Q_{n,\omega}^{\varepsilon/2}.\]
It follows that $Q$ is finitely equi-invariant.
\end{proof}

Next, we discuss the relations between finite equi-invariance in the mean and bounded invariance complexity in the mean.

Given $\omega\in\mathscr{U}$, $n\in\mathbb{N}$, and $\varepsilon>0$, let
\[\hat{Q}_{n,\omega}^\varepsilon=\{x\in Q:\max_{1\leq k\leq n}\{\frac1{k}\sum_{i=0}^{k-1}d[\phi(i,x,\omega),Q]\}<\varepsilon\}.\]
A subset $F\subset\mathscr{U}$ is called \emph{$(n,\varepsilon,Q)$-spanning set in the mean} if
\[Q=\cup_{\omega\in F}\hat{Q}_{n,\omega}^\varepsilon.\]
Let
\[\hat{r}_{inv}(n,\varepsilon,Q)=\inf\{\sharp F: F~\text{is an}~(n,\varepsilon,Q)\text{-spanning set in the mean}\}.\]

\begin{defn}
We say that $Q$ has \emph{bounded invariance complexity in the mean} if for any $\varepsilon>0$, there exists $C:=C(\varepsilon)>0$ such that $\hat{r}_{inv}(n,\varepsilon,Q)\leq C$ for all $n\in\mathbb{N}$.
\end{defn}

\begin{thm}\label{thm:BICIM-iff-FEIIM}
Let $\Sigma=(\mathbb{N}_0,X,U,\mathscr{U},\phi)$ be a system and $Q\subset X$ be a nonempty compact set. Then $Q$ is finitely equi-invariant in the mean if and only if $Q$ has bounded invariance complexity in the mean.
\end{thm}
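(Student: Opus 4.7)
The plan is to run the same compactness/diagonal argument as in Theorem~\ref{thm:BIC-iff-FEI}, only with the pointwise deviation $\phi(i,y,\omega)\in B_\varepsilon(Q)$ replaced by the Cesàro‐average deviation. The key structural observation that makes this transcription work is that $\hat{Q}_{n,\omega}^{\varepsilon}$ is \emph{decreasing in $n$} (since $\max_{1\le k\le n}$ can only grow as $n$ grows); consequently, the condition in~\eqref{eq: equi invariant in mean} of the FEIM definition holds for all $n\in\N$ if and only if $y\in\bigcap_{n\in\N}\hat{Q}_{n,\omega}^{\varepsilon}$. This is the bridge that lets us translate FEIM into a statement about $(n,\varepsilon,Q)$-spanning sets in the mean uniformly in $n$.

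For the forward direction, I would fix $\varepsilon>0$ and, for each $x\in Q$, invoke FEIM at $x$ to produce $\delta_x>0$ and a finite $F_x\subset\mathscr{U}$ such that every $y\in B(x,\delta_x)\cap Q$ lies in $\bigcap_{n}\hat{Q}_{n,\omega}^{\varepsilon}$ for some $\omega\in F_x$. Compactness of $Q$ gives a finite subcover $\{B(x_i,\delta_{x_i})\}_{i=1}^{p}$, and $F:=\bigcup_{i=1}^{p}F_{x_i}$ is then a finite $(n,\varepsilon,Q)$-spanning set in the mean for \emph{every} $n\in\N$, so $\hat{r}_{inv}(n,\varepsilon,Q)\le\#F$ for all $n$.

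For the converse, given $\varepsilon>0$, pick $C$ with $\hat{r}_{inv}(n,\varepsilon/3,Q)\le C$ for all $n$, and choose $F_n\subset\mathscr{U}$ with $\#F_n\le C$ and $Q=\bigcup_{\omega\in F_n}\hat{Q}_{n,\omega}^{\varepsilon/3}$. Using compactness of the hyperspace $2^{\mathscr{U}}$ and closedness of $\{A\in 2^{\mathscr{U}}:\#A\le C\}$, extract a Hausdorff‐convergent subsequence $F_{n_i}\to F$ with $\#F\le C$. For each $x\in Q$ and each $i$, pick $\omega_{n_i}\in F_{n_i}$ with $x\in\hat{Q}_{n_i,\omega_{n_i}}^{\varepsilon/3}$. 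By the monotonicity of $\hat{Q}_{n,\omega}^{\varepsilon/3}$ in $n$, for every $j\ge i$ we also have $x\in\hat{Q}_{n_i,\omega_{n_j}}^{\varepsilon/3}$. Pass to a further subsequence so that $\omega_{n_j}\to\omega\in F$. For each \emph{fixed} $k\le n_i$, the continuity of $\phi$ lets us pass to the limit inside the finite sum
\[
\frac{1}{k}\sum_{l=0}^{k-1}d[\phi(l,x,\omega_{n_j}),Q]\longrightarrow\frac{1}{k}\sum_{l=0}^{k-1}d[\phi(l,x,\omega),Q]\le\frac{\varepsilon}{3}<\frac{\varepsilon}{2},
\]
so $x\in\hat{Q}_{n_i,\omega}^{\varepsilon/2}$. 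Letting $i\to\infty$ (so $n_i\to\infty$) yields $x\in\bigcap_{n\in\N}\hat{Q}_{n,\omega}^{\varepsilon/2}$, and since the same finite $F$ witnesses this for every $x\in Q$, $Q$ is finitely equi-invariant in the mean (with the $\delta$ in the FEIM definition irrelevant because $F$ works uniformly).

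I expect the main subtlety to be the same one that arises in the proof of Theorem~\ref{thm:BIC-iff-FEI}: the choice $\omega_{n_i}\in F_{n_i}$ depends on $x$ and we need to argue that a subsequential limit $\omega$ lives in the \emph{fixed} limit set $F$. This is handled by the Hausdorff convergence $F_{n_i}\to F$ together with the uniform cardinality bound $\#F_n\le C$. The only genuinely new ingredient compared to the $B_\varepsilon(Q)$ case is invoking continuity of $\phi$ slotwise inside a finite Cesàro average rather than slotwise inside a single membership assertion; both reductions use only pointwise continuity of $\phi(k,x,\cdot)$.
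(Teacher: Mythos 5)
Your proposal is correct and follows essentially the same route as the paper: compactness of $Q$ plus a finite subcover for the forward direction, and for the converse, extraction of a Hausdorff-convergent subsequence $F_{n_i}\to F$ in $2^{\mathscr{U}}$, the monotonicity of $\hat{Q}_{n,\omega}^{\varepsilon}$ in $n$, and slotwise continuity of $\phi$ inside the finite Cesàro averages to pass to the limit control $\omega\in F$. The only difference is cosmetic: you state explicitly the monotonicity and the subsequence extraction for $\omega_{n_i}$ that the paper uses implicitly.
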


\begin{proof}
($\Rightarrow$) Suppose that $Q$ is finitely equi-invariant. Then for any $\varepsilon>0$ and $x\in Q$,
there are $\delta_x>0$ and $F_x\subset\mathscr{U}$ such that for every $y\in B(x,\delta_x)\cap Q$ there exists $\omega_y\in F_x$ such that
\[\frac1{n}\sum_{i=0}^{n-1}d[\phi(i,y,\omega_y),Q]<\varepsilon,~\forall~n\in\mathbb{N}.\]
Since $Q$ is compact, we can find a finite open cover $\mathcal{C}:=\{B(x_i,\delta_{x_i}),i=1,\ldots,p\}$ of $Q$. Let $F=\cup_{i=1}^pF_{x_i}$.
Then $F$ is finite and is an $(n,\varepsilon,Q)$-spanning set in the mean for every $n\in\mathbb{N}$. Hence $Q$ has bounded invariance complexity in the mean.


($\Leftarrow$)
Given $\varepsilon>0$, there exists $C$ such that $\hat{r}_{inv}(n,\frac{\varepsilon}{3},Q)\leq C$ for all $n\in\mathbb{N}$; that is, for any $n\in\mathbb{N}$ there exists $F_n\subset\mathscr{U}$ such that $\sharp F_n\leq C$ and $Q=\cup_{\omega\in F_n}\hat{Q}_{n,\omega}^{\varepsilon/3}$. By the compactness of $2^\mathscr{U}$, we can pick a convergent subsequence $\{F_{n_i}\}$ in $\{F_n\}$. We denote its limit as $F$. Therefore, we have $\sharp F\leq C$ by the fact that $\{A\in 2^\mathscr{U}:\sharp A\leq C\}$ is closed. For every $i\in\mathbb{N}$ and any $x\in Q$ there exists $\omega_{n_i}\in F_{n_i}$ such that
\[\max_{1\leq k\leq n_i}\{\frac1{k}\sum_{i=0}^{k-1}d[\phi(i,x,\omega_{n_i}),Q]\}<\frac{\varepsilon}{3}.\]
Thus we get
\[\max_{1\leq k\leq n_i}\{\frac1{k}\sum_{i=0}^{k-1}d[\phi(i,x,\omega_{n_j}),Q]\}<\frac{\varepsilon}{3}.\]
for any $j\geq i$. Suppose that $\omega_{n_i}\to\omega$. Then $\omega\in F$. Letting $j\to\infty$, we have, by the continuity of $\phi$,
\[\max_{1\leq k\leq n_i}\{\frac1{k}\sum_{i=0}^{k-1}d[\phi(i,x,\omega),Q]\}<\frac{\varepsilon}{2}.\]
Since $n_i\to\infty$ as $i\to\infty$, we obtain
\[\max_{1\leq k\leq n}\{\frac1{k}\sum_{i=0}^{k-1}d[\phi(i,x,\omega),Q]\}<\frac{\varepsilon}{2},~\forall~n\in\mathbb{N}.\]
This implies that
\[Q\subset\bigcup_{\omega\in F}\bigcap_{n=1}^\infty \hat{Q}_{n,\omega}^{\frac{\varepsilon}{2}}.\]
It follows that $Q$ is finitely equi-invariant in the mean.
\end{proof}

Finally in this section, we characterize the concept of finitely mean equi-invariance by finitely mean stability of $Q$ in the sense of Lyapunov.

Let $E\subset\N_0$. We define the \emph{upper density} $\overline{D}(E)$ of $E$ by
\[\overline{D}(E)=\limsup_{n\to\infty}\frac{\sharp (E\cap[0,n-1])}{n}.\]

\begin{defn}
Let $\Sigma=(\mathbb{N}_0,X,U,\mathscr{U},\phi)$ be a system and $Q\subset X$ be a nonempty set. A point $x\in Q$ is said to be
\emph{finitely mean stable point of $Q$ in the sense of Lyapunov} (abbreviated as \emph{finitely mean-L-stable point of $Q$})
if for every $\varepsilon>0$ there exist $\delta>0$ and a finite subset $F\subset\mathscr{U}$ such that $y\in B(x,\delta)\cap Q$ implies $d(\phi(n,y,\omega),Q)<\varepsilon$ for some $\omega\in F$ and all $n\in\mathbb{N}_0$ except a set of upper density less than $\varepsilon$. We call $Q$ \emph{finitely mean-L-stable} if every $x\in Q$ is a finitely mean-L-stable point of $Q$.
\end{defn}

\begin{thm}\label{thm:L-stable-equ-inv}
Let $\Sigma=(\mathbb{N}_0,X,U,\mathscr{U},\phi)$ be a system with $\diam(X)<\infty$ and $Q\subset X$ be a nonempty set.
Then $Q$ is finitely mean-L-stable if and only if it is finitely mean equi-invariant.
\end{thm}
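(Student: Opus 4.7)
The plan is to prove both directions by a direct Markov-type estimate together with the assumption that $\diam(X)<\infty$, choosing the parameters in the conclusion as explicit (small) multiples of the parameters in the hypothesis. The finite set $F$ carries over unchanged between the two formulations, so the only work is to control the time averages in terms of upper density and vice versa.

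For the implication ($\Leftarrow$), i.e. finitely mean equi-invariant $\Rightarrow$ finitely mean-L-stable, I would fix $\varepsilon>0$ and $x\in Q$ and apply the hypothesis with threshold $\varepsilon'=\varepsilon^{2}/2$. This gives $\delta>0$ and a finite set $F\subset\mathscr{U}$ such that every $y\in B(x,\delta)\cap Q$ admits $\omega\in F$ with $\limsup_{n}\frac{1}{n}\sum_{i=0}^{n-1}d(\phi(i,y,\omega),Q)<\varepsilon^{2}/2$. Defining the ``bad'' set $E=\{n\in\mathbb{N}_{0}:d(\phi(n,y,\omega),Q)\geq\varepsilon\}$, I would apply the Markov-style bound
\[
\frac{1}{n}\sum_{i=0}^{n-1}d(\phi(i,y,\omega),Q)\;\geq\;\varepsilon\cdot\frac{\sharp(E\cap[0,n-1])}{n},
\]
which on taking $\limsup_n$ yields $\overline{D}(E)\leq\varepsilon/2<\varepsilon$, as required for mean-L-stability.

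For the implication ($\Rightarrow$) the finite diameter hypothesis enters. Setting $D=\diam(X)$ and any $\varepsilon>0$, I would apply the mean-L-stability of $Q$ with parameter $\varepsilon'=\varepsilon/(2(1+D))$ to obtain $\delta>0$ and finite $F\subset\mathscr{U}$ such that every $y\in B(x,\delta)\cap Q$ admits $\omega\in F$ with $\overline{D}(E_{\varepsilon'})<\varepsilon'$, where $E_{\varepsilon'}=\{n:d(\phi(n,y,\omega),Q)\geq\varepsilon'\}$. Splitting the time-average into the good and bad indices,
\[
\frac{1}{n}\sum_{i=0}^{n-1}d(\phi(i,y,\omega),Q)\;\leq\;\varepsilon'\cdot\frac{\sharp([0,n-1]\setminus E_{\varepsilon'})}{n}+D\cdot\frac{\sharp(E_{\varepsilon'}\cap[0,n-1])}{n},
\]
and taking $\limsup_n$ gives a bound of $\varepsilon'+D\varepsilon'=\varepsilon'(1+D)=\varepsilon/2<\varepsilon$, which is the definition of finite mean equi-invariance at $x$.

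There is no serious obstacle here; the argument is essentially a Chebyshev/Markov swap between ``large on a set of small upper density'' and ``small in average''. The only point demanding any care is the use of $\diam(X)<\infty$ in the forward direction, which is what allows the contribution of the bad indices (whose orbit values could otherwise be arbitrarily far from $Q$) to be controlled uniformly by $D\cdot\overline{D}(E_{\varepsilon'})$; without this hypothesis the forward implication already fails on general metric spaces.
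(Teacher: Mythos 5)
Your proof is correct and follows essentially the same route as the paper: the direction from finite mean equi-invariance to finite mean-L-stability is the same Markov/Chebyshev estimate with a quadratic threshold (the paper uses $\varepsilon^2$, you use $\varepsilon^2/2$), and the converse uses the same good/bad index split with parameter $\varepsilon/(2(\diam X+1))$, the finite diameter bounding the contribution of the bad set exactly as in the paper. Nothing further is needed.
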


\begin{proof}
($\Leftarrow$) Suppose that $Q$ is finitely mean equi-invariant.
Then for any $x\in Q$ and $\varepsilon>0$, there exist $\delta>0$ and a finite subset $F\subset\mathscr{U}$
 such that for every $y\in Q$ with $d(x,y)<\delta$, we have
\[\limsup_{n\to\infty}\frac1{n}\sum_{i=0}^{n-1}d(\phi(i,y,\omega),Q)<\varepsilon^2,\]
for some $\omega\in F$.
Let $E=\{k\in\mathbb{N}_0: d(\phi(k,y,\omega),Q)\geq\varepsilon\}$. Thus
\[\varepsilon^2>\limsup_{n\to\infty}\frac1{n}\sum_{i=0}^{n-1}d(\phi(i,y,\omega),Q)\geq\limsup_{n\to\infty}\frac1{n}(\varepsilon\cdot\sharp([0,n-1]\cap E))=\varepsilon\cdot\overline{D}(E).\]
It follows that $\overline{D}(E)<\varepsilon$. Therefore, $Q$ is finitely mean-L-stable.

($\Rightarrow$) Assume that $Q$ is finitely mean-L-stable. For any $x\in Q$ and $\varepsilon>0$, let
\[\eta=\frac{\varepsilon}{2(\diam X+1)}.\]
Then there exist $\delta>0$ and  a finite subset $F\subset\mathscr{U}$ such that for any $y\in B(x,\delta)\cap Q$, $d(\phi(n,y,\omega),Q)<\eta$
for some $\omega\in F$ and  all $n\in\mathbb{N}_0$ except a set of upper density less than $\eta$. Let
\[E=\{k\in\mathbb{N}_0:d(\phi(n,y,\omega),Q)\geq\eta\}.\]
Thus $\overline{D}(E)<\eta$ and
\begin{align*}
\limsup_{n\to\infty}\frac1{n}\sum_{i=0}^{n-1}d(\phi(i,y,\omega),Q)&\leq\limsup_{n\to\infty}\frac1{n}(\diam(X)\cdot\sharp([0,n-1]\cap E)+n\eta)\\
&\leq\diam(X)\overline{D}(E)+\eta\leq\diam(X)\eta+\eta\leq\frac{\varepsilon}{2}.
\end{align*}
This implies that $Q$ is finitely mean equi-invariant.
\end{proof}

\section{Dichotomy theorems  for control sets}\label{sec:Dichotomy-theorem}
In this section, we will discuss three types of dichotomy theorems for control sets.

First, let us recall some basic notions.
Let $\Sigma=(\mathbb{N}_0,X,U,\mathscr{U},\phi)$ be a control system. For $x\in X$ and $n\in\mathbb{N}$, the \emph{set of points reachable from $x$ up to time $n$} is defined by
\[\mathcal{O}_{\leq n}^{+}(x):=\{y\in X : \exists  m\in[0,n], \omega\in\mathscr{U}~\text{with} ~y=\phi(m,x,\omega)\}.\]
The \emph{positive orbit} of $x$ is defined by
\[\mathcal{O}^+(x)=\bigcup_{n\in\mathbb{N}}\mathcal{O}_{\leq n}^{+}(x).\]

\begin{defn}(see~\cite[Definition 3.1]{Colonius1993Some} or~\cite[Definition 1.12]{Kawan2013})
Let $\Sigma=(\mathbb{N}_0,X,U,\mathscr{U},\phi)$ be a system. A set $D\subset X$ is called a \emph{control set} of system $\Sigma$ if the following conditions hold.
\begin{enumerate}
\item $D$ is controlled invariant, that is, for every $x\in D$ there exists $\omega\in \mathscr{U}$ such that $\phi(\N_0, x,\omega)\subset D$.

\item For all $x\in D$ one has $D\subset cl~\mathcal{O}^{+}(x)$ , where $cl~\mathcal{O}^{+}(x)$ denotes the closure of $\mathcal{O}^{+}(x)$.

\item $D$ is maximal with these properties, that is, if $D'\supset D$ satisfies conditions (1) and (2), then $D'=D$.
\end{enumerate}
\end{defn}

\subsection{The first type of dichotomy theorem}
By Theorem~\ref{thm:BIC-iff-FEI} and Corollary 3.7 in~\cite{Wang2019dichotomy}, we have
\begin{cor}\label{cor3.3}
Let $\Sigma=(\mathbb{N}_0,X,U,\mathscr{U},\phi)$ and $Q\subset X$ be a compact control set with nonempty interior. Assume that $\mathscr{U}$ is a compact metrizable space and $\phi: \mathbb{N}_0\times X\times\mathscr{U}\to X$ is continuous. Then the following conditions are equivalent.
\begin{enumerate}
  \item $Q$ is equi-invariant;
  \item $Q$ is finitely equi-invariant;
  \item $Q$ has bounded invariance complexity.
\end{enumerate}
\end{cor}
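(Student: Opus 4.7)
The plan is to close the loop by combining three ingredients: the trivial implication built into the definitions, Theorem~\ref{thm:BIC-iff-FEI} from the preceding section, and the density-type statement from Proposition~3.5(ii) of~\cite{Wang2019dichotomy} (i.e.\ Corollary~3.7 there), which handles the delicate direction on control sets.

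First I would dispatch the easy implication \emph{(1)} $\Rightarrow$ \emph{(2)}: a singleton $F=\{\omega_x\}$ witnessing equi-invariance at $x$ is in particular a finite witness, so EI$(Q)\subset$FEI$(Q)$ tautologically. Next, the equivalence \emph{(2)} $\Leftrightarrow$ \emph{(3)} is precisely the content of Theorem~\ref{thm:BIC-iff-FEI} applied to the compact set $Q$; the hypothesis on $\mathscr{U}$ being compact metrizable (used in the $(\Leftarrow)$ direction through the compactness of $2^{\mathscr{U}}$) is available by assumption here.

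For the remaining implication \emph{(3)} $\Rightarrow$ \emph{(1)}, I would invoke Proposition~3.5(ii) of~\cite{Wang2019dichotomy} as quoted in the excerpt: under the standing hypotheses that $U$ is compact metrizable and $\phi$ is continuous, bounded invariance complexity of $Q$ yields $\mathrm{cl}_Q\,\mathrm{EI}(Q)=Q$, i.e.\ equi-invariant points are dense in $Q$. To promote this density to the equality $\mathrm{EI}(Q)=Q$ when $Q$ is a control set with nonempty interior, I would appeal to Corollary~3.7 of~\cite{Wang2019dichotomy}, whose role is precisely to leverage the approximate controllability enjoyed inside a control set: every point of $Q$ is (approximately) reachable from points in the dense set of equi-invariant points, and the control concatenation together with continuity of $\phi$ transports an equi-invariant witness at a nearby point to an equi-invariant witness at $x$. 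Together these give $Q=\mathrm{EI}(Q)$.

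The step I expect to be the only nontrivial one is \emph{(3)} $\Rightarrow$ \emph{(1)}: the jump from a dense set of equi-invariant points to all of $Q$ really uses the control-set structure (nonempty interior plus approximate reachability) and is the reason for the blanket assumptions on $\mathscr{U}$ and $\phi$. In the write-up I would make this appeal explicit by citing Corollary~3.7 of~\cite{Wang2019dichotomy}, rather than reproving it, since the statement of the present corollary is flagged as a consequence of that result combined with Theorem~\ref{thm:BIC-iff-FEI}.
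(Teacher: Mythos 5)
Your proposal is correct and follows essentially the same route as the paper, which derives the corollary directly from Theorem~\ref{thm:BIC-iff-FEI} (for the equivalence of finite equi-invariance with bounded invariance complexity) together with Corollary~3.7 of~\cite{Wang2019dichotomy} (for the equivalence with equi-invariance on a compact control set with nonempty interior). Your additional remarks on the trivial implication EI~$\Rightarrow$~FEI and on where the compactness of $2^{\mathscr{U}}$ and the control-set structure enter are consistent with how these cited results are used.
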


\begin{defn}
Let $\Sigma=(\mathbb{N}_0,X,U,\mathscr{U},\phi)$ be a system and $Q\subset X$ be a nonempty set. We say that $Q$ is an \emph{unstable set} if there exists $\varepsilon>0$ such that for any $x\in Q$, $\delta>0$ and $\omega\in\mathscr{U}$, we have
\[d[\phi(m,y,\omega),Q]\geq\varepsilon\]
for some $y\in B(x,\delta)\cap Q$ and $m\in\mathbb{N}_0$.
\end{defn}

In~\cite{Wang2019dichotomy}, Wang et al. showed the following dichotomy theorem for control sets.

\begin{thm}(\cite[Theorem 3.13]{Wang2019dichotomy})
Let $\Sigma=(\mathbb{N}_0,X,U,\mathscr{U},\phi)$ be a system and $Q\subset X$ be a control set with $cl \text{Int}(Q) =cl Q$. Then $Q$ is either equi-invariant or unstable.
\end{thm}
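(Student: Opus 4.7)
I would prove the contrapositive: assume $Q$ is not unstable and show $\mbox{EI}(Q) = Q$. The negation of instability unpacks as: for every $\varepsilon > 0$ there exist $x_0 \in Q$, $\delta_0 > 0$ and $\omega_0 \in \mathscr{U}$ with $\phi(\mathbb{N}_0, z, \omega_0) \subset B_\varepsilon(Q)$ for every $z \in B(x_0, \delta_0) \cap Q$. Thus at every scale $\varepsilon$, $Q$ possesses at least one distinguished point around which a common control robustly stabilizes $Q$; the task is to propagate this robustness from one such point to every $x \in Q$, using that $Q$ is a control set with $cl\,\text{Int}(Q) = cl\,Q$.

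Fix an arbitrary $x \in Q$ and $\varepsilon > 0$, and apply the non-instability assumption with tolerance $\varepsilon/2$ to obtain $x_0$, $\delta_0$, $\omega_0$. My plan is to build a witnessing control $\omega$ for $x$ as a concatenation of a steering control $\omega^{*}$ followed by $\omega_0$, where $\omega^{*}$ drives a whole $Q$-neighborhood of $x$ into $B(x_0, \delta_0) \cap Q$ in some finite number $n$ of steps. To ensure the switching state actually lies in $Q$, I would use $cl\,\text{Int}(Q) = cl\,Q$ to pick $x_0' \in \text{Int}(Q)$ with $d(x_0, x_0') < \delta_0/3$ and a radius $r > 0$ satisfying $\overline{B}(x_0', r) \subset \text{Int}(Q) \cap B(x_0, \delta_0)$. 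The control-set property $Q \subset cl\,\mathcal{O}^+(x)$ then furnishes $\omega^{*}$ and $n$ with $\phi(n, x, \omega^{*}) \in B(x_0', r/2) \subset Q$; continuity of the time-$n$ map at $x$ produces $\delta > 0$ so that $\phi(n, y, \omega^{*}) \in B(x_0, \delta_0) \cap Q$ for every $y \in B(x, \delta) \cap Q$. From step $n$ onwards $\omega_0$ takes over and the tail $\phi(k + n, y, \omega)$ remains in $B_{\varepsilon/2}(Q) \subset B_\varepsilon(Q)$ by the choice of $\omega_0$.

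The \emph{principal obstacle} is controlling the initial segment $\phi(k, y, \omega^{*})$ for $0 \le k < n$ inside $B_\varepsilon(Q)$: the control-set axiom supplies approximate reachability of targets but does not by itself constrain the steering trajectory to stay near $Q$. To address this I would refine $\omega^{*}$ as a concatenation along a finite chain of interior waypoints $x = z_0, z_1, \ldots, z_N = x_0'$ with $z_i \in \text{Int}(Q)$ for $i \ge 1$, chosen consecutively closer than the continuity modulus of $\phi$ along each preceding segment. At every stage approximate reachability from $z_i$ together with a neighborhood basis of $z_{i+1}$ lying entirely inside $\text{Int}(Q)$ lets one steer a small neighborhood of $z_i$ into that of $z_{i+1}$; the delicate point is that the finitely many intermediate iterates must simultaneously be kept inside $B_{\varepsilon/2}(Q)$, which forces one to iteratively shrink the waypoint neighborhoods to absorb the continuity moduli accumulated from all previously fixed short pieces. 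Concatenating the resulting short pieces and finally $\omega_0$ delivers a single $\omega \in \mathscr{U}$ and $\delta > 0$ witnessing $x \in \mbox{EI}(Q)$, so that $Q = \mbox{EI}(Q)$ and $Q$ is equi-invariant.
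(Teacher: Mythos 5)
Your overall plan (contrapositive: not unstable $\Rightarrow$ every $x\in Q$ is equi-invariant, by steering a small $Q$-neighborhood of $x$ into the stabilized ball $B(x_0,\delta_0)\cap Q$ and then switching to $\omega_0$) is the right strategy, and it is essentially how the paper handles its analogous mean dichotomies (via the level sets $EIM_k(Q)$, $\mbox{MEI}_k(Q)$ and the concatenation $\hat\omega=\omega'\omega^m$). But the step you yourself flag as the principal obstacle is not closed by your proposed fix. Approximate reachability from $z_i$ to a small neighborhood of $z_{i+1}$ gives you some time $m_i$ and some control, with \emph{no constraint whatsoever} on the intermediate states $\phi(j,\cdot,\cdot)$, $0<j<m_i$: in a discrete-time system a single application of some $F_u$ can throw the state far from $Q$, and making consecutive waypoints closer does not shorten the steering time or shrink the excursion. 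Shrinking the waypoint neighborhoods only controls propagation of initial errors through continuity moduli; it does nothing to keep the nominal steering trajectory itself inside $B_\varepsilon(Q)$. So as written the initial segment $\phi(k,y,\omega^*)$, $0\le k<n$, is not controlled, and this is exactly what equi-invariance requires.

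The missing ingredient is the no-return property of control sets with nonempty interior (Lemma~\ref{lem:no-return} in this paper, quoted from Kawan's Corollary 1.1): if $y\in Q$, $\omega\in\mathscr{U}$ and $\phi(n,y,\omega)\in Q$, then $\phi([0,n],y,\omega)\subset Q$. Since $cl\,\mathrm{Int}(Q)=cl\,Q$ and $Q\neq\emptyset$ give $\mathrm{Int}(Q)\neq\emptyset$, the lemma applies; and because you have already arranged $\phi(n,y,\omega^*)\in B(x_0',r)\subset\mathrm{Int}(Q)\cap B(x_0,\delta_0)$ for all $y\in B(x,\delta)\cap Q$, the lemma yields $\phi([0,n],y,\omega^*)\subset Q\subset B_\varepsilon(Q)$ for free, with a \emph{single} steering segment --- no waypoint chain is needed. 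This is precisely how the paper argues in Lemmas~\ref{lem:EIMk(Q)-equials-Q} and~\ref{lem:3-13} before concatenating with the stabilizing control. With that replacement your argument is correct and amounts to the same proof as the cited one, merely organized as a direct contrapositive instead of through the sets $EI_k(Q)$ and the characterization of instability as $EI_k(Q)=\emptyset$ for some $k$.
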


\subsection{The second type of dichotomy theorem}
Let
\begin{align*}
EIM_k(Q)=\{x\in Q:&\exists~\delta>0~\text{and}~\omega\in\mathscr{U}~\text{s.t.}\\
&\frac1{n}\sum_{i=0}^{n-1}d[\phi(i,y,\omega),Q]<\frac1{k},~\forall~n\in\mathbb{N}~\text{and}~y\in B(x,\delta)\cap Q\}.
\end{align*}
Then $EIM_k(Q)$ is an open subset of $Q$ and $EIM(Q)=\cap_{k=1}^\infty EIM_k(Q)$.

\begin{lem}\label{lem:no-return}(\cite[Corollary 1.1]{Kawan2013})
A control set $D$  with nonempty interior has the no-return property, that is, if $x\in D, n\in \mathbb{N}_0$ and $\omega\in \mathscr{U}$ with $\phi(n,x,\omega)\in D$ implies $\phi([0,n], x,\omega)\subset D$.
\end{lem}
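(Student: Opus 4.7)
The plan is to invoke the maximality clause in the definition of a control set. Set
\[D' := D \cup \{\phi(i,x,\omega) : 0 \le i \le n\},\]
so that $D' \supset D$. If I can verify that $D'$ is itself controlled invariant and satisfies $D' \subset cl\,\mathcal{O}^+(y)$ for every $y\in D'$, then maximality condition (3) in the definition of control set forces $D' = D$, which is precisely the no-return property. (The case $n=0$ is trivial, so I assume $n\ge 1$.)

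Controlled invariance of $D'$ is straightforward. For $y \in D$, use the existing control keeping its orbit in $D \subset D'$. For $y = \phi(i,x,\omega)$ with $0 \le i < n$, concatenate the finite control block $(\omega_i,\omega_{i+1},\ldots,\omega_{n-1})$ with any control $\eta \in \mathscr{U}$ that keeps $\phi(n,x,\omega) \in D$ inside $D$. The resulting orbit of $y$ first traces out $\phi(i,x,\omega), \phi(i+1,x,\omega), \ldots, \phi(n,x,\omega)$, all of which lie in $D'$ by construction, and thereafter remains inside $D \subset D'$.

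Approximate reachability is the more delicate part, and it is exactly where the nonempty-interior hypothesis on $D$ enters. The crucial intermediate claim is that for every $z \in D$ and every $0 \le i \le n$ we have $\phi(i,x,\omega) \in cl\,\mathcal{O}^+(z)$. To prove it, choose $z_0 \in \mathcal{O}^+(z) \cap \text{Int}(D)$, which exists because $\text{Int}(D)$ is a nonempty open set and $D \subset cl\,\mathcal{O}^+(z)$. Since $x \in D \subset cl\,\mathcal{O}^+(z_0)$, there are points $x_k = \phi(m_k, z_0, \eta_k) \in \mathcal{O}^+(z_0)$ with $x_k \to x$. Continuity of $\phi$ yields $\phi(i,x_k,\omega) \to \phi(i,x,\omega)$, while the control obtained by concatenating $\eta_k$ with $\omega$ exhibits each $\phi(i,x_k,\omega)$ as a point of $\mathcal{O}^+(z_0) \subset \mathcal{O}^+(z)$, so $\phi(i,x,\omega) \in cl\,\mathcal{O}^+(z)$.

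With this sublemma the reachability of $D'$ from any $y \in D'$ follows quickly. For $y \in D$ combine $D \subset cl\,\mathcal{O}^+(y)$ with the sublemma. For $y = \phi(i,x,\omega)$ first apply the control segment $(\omega_i,\ldots,\omega_{n-1})$ to reach $\phi(n,x,\omega) \in D$, and then the previous case yields $D' \subset cl\,\mathcal{O}^+(\phi(n,x,\omega)) \subset cl\,\mathcal{O}^+(y)$. I expect the main obstacle to be precisely this ``backward'' reachability, namely reaching an earlier orbit point $\phi(j,x,\omega)$ from a later one $\phi(i,x,\omega)$, and routing through the interior of $D$ via the sublemma is what makes this work and is where the hypothesis $\text{Int}(D)\neq\emptyset$ is indispensable.
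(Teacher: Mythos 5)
The paper offers no proof of this lemma -- it is quoted verbatim from Kawan's monograph (Corollary 1.1 there) -- so there is nothing internal to compare against; judged on its own terms, your maximality argument is correct and is the standard route to the no-return property. Enlarging $D$ to $D'=D\cup\{\phi(i,x,\omega):0\le i\le n\}$, your check of controlled invariance (run $(\omega_i,\ldots,\omega_{n-1})$ to reach $\phi(n,x,\omega)\in D$, then switch to a control keeping it in $D$) and of approximate reachability (exact forward reachability from each orbit point to $\phi(n,x,\omega)\in D$, together with the sublemma that $\phi(i,x,\omega)\in cl\,\mathcal{O}^+(z)$ for every $z\in D$) are both sound, and condition (3) of the definition then forces $D'=D$, which is the claim. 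The sublemma works precisely because one leg is exact reachability composed with the \emph{fixed-time} continuous map $\phi(i,\cdot,\omega)$, so the usual failure of transitivity of approximate reachability never arises. One remark: the detour through $z_0\in\mathcal{O}^+(z)\cap\text{Int}(D)$ is superfluous -- since $z\in D$ you already have $x\in D\subset cl\,\mathcal{O}^+(z)$, and you may push the approximating points $x_k\to x$ forward by $\phi(i,\cdot,\omega)$ directly -- so, contrary to your closing sentence, your argument never genuinely uses $\text{Int}(D)\neq\emptyset$. Under the definition of control set adopted in this paper that is harmless (the hypothesis simply goes unused in your route, which only strengthens the statement), but the claim that the interior is ``indispensable'' at that step is the one inaccurate assertion in the write-up.
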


\begin{lem}\label{lem:EIMk(Q)-equials-Q}
Let $\Sigma=(\mathbb{N}_0,X,U,\mathscr{U},\phi)$ be a system and $Q\subset X$ be a control set.
If $EIM_k(Q)\cap\text{Int}(Q) \neq\emptyset$ for some $k\in\mathbb{N}$, then $EIM_k(Q)=Q$.
\end{lem}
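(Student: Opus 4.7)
The strategy is a two-stage concatenation: I would use the approximate reachability property of the control set $Q$ to steer any $y\in Q$ into a small neighborhood of the interior point $x_0\in EIM_k(Q)\cap\text{Int}(Q)$, and then switch to the stabilizing control $\omega_0$ provided by $x_0\in EIM_k(Q)$. The no-return property (Lemma~\ref{lem:no-return}) is what guarantees that the finite steering phase stays inside $Q$, while the fact that $x_0$ is interior leaves room to absorb continuity errors coming from nearby initial conditions.

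Concretely, I would fix $x_0\in EIM_k(Q)\cap\text{Int}(Q)$ with witnesses $\delta_0>0$ and $\omega_0\in\mathscr{U}$ from the definition of $EIM_k(Q)$, shrinking $\delta_0$ if necessary so that $B(x_0,\delta_0)\subset\text{Int}(Q)$. For an arbitrary $y\in Q$, the control-set axiom $x_0\in Q\subset cl\,\mathcal{O}^+(y)$ produces $m\in\mathbb{N}_0$ and $\sigma\in\mathscr{U}$ with $\phi(m,y,\sigma)\in B(x_0,\delta_0/2)$, and Lemma~\ref{lem:no-return} then forces $\phi([0,m],y,\sigma)\subset Q$. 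I concatenate to form $\omega_y\in\mathscr{U}$ defined by $(\omega_y)_i=\sigma_i$ for $i<m$ and $(\omega_y)_i=(\omega_0)_{i-m}$ for $i\geq m$, so that $\phi(i,z,\omega_y)=\phi(i-m,\phi(m,z,\sigma),\omega_0)$ whenever $i\geq m$. Choosing a slack parameter $\eta\in(0,1/k)$, say $\eta=1/(2k)$, and using joint continuity of $\phi$ on the finite time window $\{0,\ldots,m\}$, I pick $\delta_y>0$ so that for every $z\in B(y,\delta_y)\cap Q$ one has $d[\phi(i,z,\sigma),Q]<\eta$ for $i<m$ and $z':=\phi(m,z,\sigma)\in B(x_0,\delta_0)\subset Q$. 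The $EIM_k$ estimate at $x_0$ applied to $z'$ then yields, for $n>m$,
\[\frac{1}{n}\sum_{i=0}^{n-1} d[\phi(i,z,\omega_y),Q] \;<\; \frac{m\eta}{n}+\frac{n-m}{n}\cdot\frac{1}{k} \;=\; \frac{1}{k}+\frac{m}{n}\Big(\eta-\frac{1}{k}\Big) \;<\; \frac{1}{k},\]
while for $1\leq n\leq m$ the same average is directly bounded by $\eta<1/k$, so $y\in EIM_k(Q)$.

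The step I expect to require the most care is the calibration $\eta<1/k$. The definition of $EIM_k$ demands a \emph{strict} bound $<1/k$ for every $n\in\mathbb{N}$, not merely asymptotically; a loose choice of $\eta$ would leave a positive residual from the steering segment that is not cancelled by the savings on the $\omega_0$-segment, and one would only obtain $<1/k+\text{error}$. Choosing $\eta-1/k<0$ is precisely what makes the correction $\frac{m}{n}(\eta-1/k)$ strictly negative and delivers the uniform bound. The role of Lemma~\ref{lem:no-return} is equally delicate: it lets me estimate $d[\phi(i,z,\sigma),Q]$ by the continuity distance $d(\phi(i,z,\sigma),\phi(i,y,\sigma))$, because $\phi(i,y,\sigma)$ truly lies in $Q$ for $i\leq m$ rather than only returning to $Q$ at time $m$.
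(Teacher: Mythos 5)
Your proposal is correct and follows essentially the same route as the paper: use approximate reachability of the control set to steer $y$ into the $EIM_k$-ball around the interior point, invoke the no-return property for the transient segment, and concatenate with the stabilizing control $\omega_0$. The only (harmless) difference is that the paper notes the transient actually lies in $Q$, so those terms vanish exactly, whereas you bound them by $\eta<1/k$ via continuity; both deliver the required strict bound for all $n$.
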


\begin{proof}
Pick $x\in EIM_k(Q)\cap \text{Int}(Q) $. Then there exist $\delta>0$ and $\omega\in\mathscr{U}$ such that for every $y\in B(x,\delta)\subset\text{Int}(Q) $, we have
\[\frac1{n}\sum_{i=0}^{n-1}d[\phi(i,y,\omega),Q]<\frac1{k},~\forall~n\in\mathbb{N}.\]
For any $x'\in Q$, there exist $m\in\mathbb{N}_0$, $\omega'\in\mathscr{U}$, and $\delta'>0$ such that
\[\phi(m,B(x',\delta'),\omega')\subset B(x,\delta)\subset\text{Int}(Q) .\]
By the no-return property (see Lemma~\ref{lem:no-return}),
\[\phi([0,m],B(x',\delta'),\omega')\subset B(x,\delta)\subset\text{Int}(Q) .\]
Let $\hat{\omega}=\omega'\omega^m$. Then for any $y\in B(x',\delta')$,
\[\frac1{n}\sum_{i=0}^{n-1}d[\phi(i,y,\hat{\omega}),Q]=\left\{
    \begin{array}{ll}
      =0, & 0\leq n\leq m, \\
      <\frac1{k}, & n>m.
    \end{array}
  \right.
\]
So $x'\in EIM_k(Q)$ and $EIM_k(Q)=Q$.
\end{proof}

By Theorem~\ref{thm:BICIM-iff-FEIIM}, we have
\begin{cor}\label{cor3.9}
Let $\Sigma=(\mathbb{N}_0,X,U,\mathscr{U},\phi)$ and $Q\subset X$ a compact control set with nonempty interior. Assume that $\mathscr{U}$ is a compact metrizable space and $\phi: \mathbb{N}_0\times X\times\mathscr{U}\to X$ is continuous. Then the following conditions are equivalent.
\begin{enumerate}
  \item $Q$ is equi-invariant in the mean;
  \item $Q$ is finitely equi-invariant in the mean;
  \item $Q$ has bounded invariance complexity in the mean.
\end{enumerate}
\end{cor}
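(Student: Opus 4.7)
The plan is to prove the cycle (1) $\Rightarrow$ (2) $\Rightarrow$ (3) $\Rightarrow$ (1). The first implication is immediate from the definitions: taking the finite set $F$ in the definition of FEIM to be a singleton shows $\mbox{EIM}(Q) \subset \mbox{FEIM}(Q)$, so if $\mbox{EIM}(Q) = Q$ then $\mbox{FEIM}(Q) = Q$. The equivalence (2) $\Leftrightarrow$ (3) is exactly Theorem~\ref{thm:BICIM-iff-FEIIM} applied to the compact set $Q$. The substantive step is therefore (2) $\Rightarrow$ (1).

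For this implication I would combine a Baire-type category argument with the control-set machinery in Lemma~\ref{lem:EIMk(Q)-equials-Q}. Fix $k \in \mathbb{N}$. Since $\mbox{EIM}(Q) = \bigcap_k EIM_k(Q)$, it suffices to show $EIM_k(Q) = Q$ for every $k$; by Lemma~\ref{lem:EIMk(Q)-equials-Q} this reduces to producing a single point of $EIM_k(Q) \cap \mbox{Int}(Q)$. Pick $x_0 \in \mbox{Int}(Q)$ and apply the FEIM hypothesis at $x_0$ with tolerance $\tfrac{1}{2k}$: this yields $\delta > 0$ and a finite $F \subset \mathscr{U}$ such that every $y \in B(x_0,\delta) \cap Q$ satisfies $\tfrac{1}{n}\sum_{i=0}^{n-1} d(\phi(i,y,\omega),Q) < \tfrac{1}{2k}$ for all $n$ and \emph{some} $\omega \in F$. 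After shrinking $\delta$ one may assume $\overline{B(x_0,\delta)} \subset \mbox{Int}(Q)$.

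For each $\omega \in F$, define
\[
A_\omega := \Big\{ y \in Q : \frac{1}{n}\sum_{i=0}^{n-1} d(\phi(i,y,\omega),Q) \leq \tfrac{1}{2k} \text{ for all } n \in \mathbb{N} \Big\}.
\]
Each $A_\omega$ is closed in $X$: for fixed $\omega$ and $n$, continuity of $\phi$ in the second argument makes $y \mapsto \tfrac{1}{n}\sum_{i=0}^{n-1}d(\phi(i,y,\omega),Q)$ continuous, so $A_\omega$ is the intersection over $n$ of closed sets, intersected with the compact set $Q$. The pointwise FEIM statement then gives $\overline{B(x_0,\delta)} = \bigcup_{\omega \in F} \big(A_\omega \cap \overline{B(x_0,\delta)}\big)$. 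Since $\overline{B(x_0,\delta)}$ has nonempty interior in $X$ but is a finite union of closed sets, an elementary category argument forces some $A_{\omega_0} \cap \overline{B(x_0,\delta)}$ to have nonempty interior in $X$; hence there exist $y_0$ and $\delta_1 > 0$ with $B(y_0,\delta_1) \subset A_{\omega_0} \cap \mbox{Int}(Q)$. Since $\tfrac{1}{2k} < \tfrac{1}{k}$, this shows $y_0 \in EIM_k(Q) \cap \mbox{Int}(Q)$, and Lemma~\ref{lem:EIMk(Q)-equials-Q} completes the proof.

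The main subtlety is the passage from the strict inequality in the definition of FEIM to the non-strict inequality in the definition of $A_\omega$: this is essential so that the sets $A_\omega$ are \emph{closed} and the category argument applies, and is accommodated by invoking FEIM with tolerance $\tfrac{1}{2k}$ rather than $\tfrac{1}{k}$. A minor auxiliary point is arranging that the open ball produced by the category argument sits inside $\mbox{Int}(Q)$, which is handled at the outset by shrinking $\delta$ so that $\overline{B(x_0,\delta)} \subset \mbox{Int}(Q)$.
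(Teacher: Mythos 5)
Your argument is correct, and for the substantive implication it takes a genuinely different (and more economical) route than the paper. The paper closes the cycle by proving $(3)\Rightarrow(1)$: it re-enters the proof of Theorem~\ref{thm:BICIM-iff-FEIIM} to extract, for each $k$, a single finite family $F\subset\mathscr{U}$ with $Q\subset\bigcup_{\omega\in F}\bigcap_n \hat{Q}_{n,\omega}^{1/2k}$, then invokes the decomposition Lemma~3.4 of \cite{Wang2019dichotomy} to conclude that $EIM_k(Q)$ is dense in $Q$, applies the Baire category theorem to get that $EIM(Q)=\bigcap_k EIM_k(Q)$ is a dense $G_\delta$ in the compact set $Q$, and only then finds a point of $EIM(Q)\cap\text{Int}(Q)$ to feed into Lemma~\ref{lem:EIMk(Q)-equials-Q}. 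You instead prove $(2)\Rightarrow(1)$ directly and locally: you apply the FEIM hypothesis at one interior point $x_0$ with tolerance $\tfrac{1}{2k}$, cover a closed ball $\overline{B(x_0,\delta)}\subset\text{Int}(Q)$ by the finitely many closed sets $A_\omega$, and use the elementary fact that a finite union of closed sets covering a nonempty open set must contain one with nonempty interior to produce a point of $EIM_k(Q)\cap\text{Int}(Q)$ for each $k$ separately; Lemma~\ref{lem:EIMk(Q)-equials-Q} then yields $EIM_k(Q)=Q$ for every $k$, hence $EIM(Q)=Q$. This avoids both the external Lemma~3.4 of \cite{Wang2019dichotomy} and the Baire category theorem (for finitely many closed sets no completeness or compactness is needed; alternatively, compactness of $\overline{B(x_0,\delta)}\subset Q$ makes it a Baire space), and it does not need the global spanning family from Theorem~\ref{thm:BICIM-iff-FEIIM}. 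Two small points you handled correctly and should keep explicit: the passage from strict to non-strict inequality (with $\tfrac{1}{2k}$ in place of $\tfrac1k$) to make the $A_\omega$ closed, and the fact that the open ball is dense in its closure, so the interior point produced by the covering argument indeed gives a ball $B(y_0,\delta_1)\subset A_{\omega_0}\cap\text{Int}(Q)$.
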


\begin{proof}
We have shown that $(2)$ and $(3)$ are equivalent.
It is clear that $(1)$ implies $(2)$, we only need to prove $(3)$ implies $(1)$.
For every $k\in\mathbb{N}$, it follows from the proof of~Theorem~\ref{thm:BICIM-iff-FEIIM} that
\[Q\subset\bigcup_{\omega\in F}\bigcap_{n=1}^\infty \hat{Q}_{n,\omega}^{1/2k},\]
where $F$ is a finite set of $\mathscr{U}$ and
\[\hat{Q}_{n,\omega}^{1/2k}=\{x\in Q:\max_{1\leq j\leq n}\{\frac1{j}\sum_{i=0}^{j-1}d[\phi(i,x,\omega),Q]\}<\frac{1}{2k}\}.\]
Let $F=\{\omega_i:1\leq i\leq\sharp F\}$ and
\[Q_i=\cap_{n=1}^\infty\{x\in Q:\max_{1\leq j\leq n}\{\frac1{j}\sum_{r=0}^{j-1}d[\phi(r,x,\omega_i),Q]\leq\frac{1}{2k}\},i=1,\ldots,\sharp F.\]
Then $Q_i$ is closed in $Q$ for $i=1,\ldots,\sharp F$ and $Q=\cup_{i=1}^{\sharp F}Q_i$.
Let $Q_1':=Q_1$ and $Q_i':=cl_Q(Q_i\setminus\cup_{i=1}^{i-1}Q_j)$ for $2\leq i\leq\sharp F$.
Using Lemma 3.4 in~\cite{Wang2019dichotomy}, we have
\[\bigcup_{i=1}^{\sharp F}Q_i'=Q,~\bigcup_{i=1}^{\sharp F}cl_Q(Q_i'\setminus\cup_{j\neq i}Q_j')=Q.\]
For any $i\in\{1,\ldots,\sharp F\}$ and $x\in Q_i'\setminus\cup_{j\neq i}Q_j'$, there exists $\delta>0$ such that
\[B(x,\delta)\cap Q=B(x,\delta)\cap Q_i'.\]
Therefore, we get
\begin{align*}
B(x,\delta)\cap Q\subset Q_i'\subset Q_i,
\end{align*}
which implies that
\begin{align*}
\frac1{n}\sum_{r=0}^{n-1}d[\phi(r,y,\omega_i),Q]<\frac1{k},~\forall~n\in\mathbb{N}~\text{and}~y\in B(x,\delta)\cap Q.
\end{align*}
Thus
$Q_i'\setminus\cup_{j\neq i}Q_j'\subset EIM_k(Q)$.
It follows that $cl_Q EIM_k(Q)=Q$. According to the Baire category theorem,
we see that $EIM(Q)=\cap_{k=1}^\infty EIM_k(Q)$ is a dense $G_\delta$ subset of $Q$. Hence $EIM(Q)\cap \text{Int}(Q) \neq\emptyset$.
Pick $x\in EIM(Q)\cap \text{Int}(Q) $. Using Lemma~\ref{lem:EIMk(Q)-equials-Q}, we obtain
$EIM(Q)=Q$.
\end{proof}

\begin{defn}
Let $\Sigma=(\mathbb{N}_0,X,U,\mathscr{U},\phi)$ be a system and $Q\subset X$ be a nonempty set. We say that $Q$ is an \emph{unstable set in the mean} if there exists $\varepsilon>0$ such that for any $x\in Q$, $\delta>0$ and $\omega\in\mathscr{U}$ there exist $y\in B(x,\delta)\cap Q$ and $m\in\mathbb{N}_0$,
\[\frac1{m}\sum_{i=0}^{m-1}d[\phi(i,y,\omega),Q]\geq\varepsilon.\]
\end{defn}
By a direct observation, we get
\begin{lem}\label{lem:unstable-mean-iff-EIMK}
Let $\Sigma=(\mathbb{N}_0,X,U,\mathscr{U},\phi)$ be a system. Then $Q\subset X$ is an unstable set in the mean if and only if there exists $k\in\mathbb{N}$ such that $EIM_k(Q)=\emptyset$.
\end{lem}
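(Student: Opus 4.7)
The claim is essentially a matter of unpacking the two definitions and matching the quantifiers, so my plan is to prove it by directly negating the condition $x\in EIM_k(Q)$ and comparing it term by term with the definition of ``unstable in the mean''. Observe that $x \notin EIM_k(Q)$ means: for every $\delta>0$ and every $\omega\in\mathscr{U}$, there exist $n\in\mathbb{N}$ and $y\in B(x,\delta)\cap Q$ such that
\[
\frac{1}{n}\sum_{i=0}^{n-1} d[\phi(i,y,\omega),Q] \geq \frac{1}{k}.
\]
So ``$EIM_k(Q)=\emptyset$'' is literally the statement that the unstable-in-the-mean condition holds with the constant $\varepsilon = 1/k$.

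For the direction ($\Leftarrow$), I assume $EIM_k(Q)=\emptyset$ for some $k\in\mathbb{N}$ and set $\varepsilon := 1/k$. Then for every $x\in Q$, $\delta>0$ and $\omega\in\mathscr{U}$, the failure of $x\in EIM_k(Q)$ hands us the desired $y\in B(x,\delta)\cap Q$ and $m\in\mathbb{N}_0$ with the Cesaro average at least $1/k = \varepsilon$. This is precisely the instability condition, so $Q$ is unstable in the mean.

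For the direction ($\Rightarrow$), I assume $Q$ is unstable in the mean with constant $\varepsilon>0$ and choose any $k\in\mathbb{N}$ with $1/k \leq \varepsilon$. For an arbitrary $x\in Q$, I must verify $x\notin EIM_k(Q)$: given any candidate $\delta>0$ and $\omega\in\mathscr{U}$, instability supplies $y\in B(x,\delta)\cap Q$ and $m\in\mathbb{N}_0$ with $\tfrac{1}{m}\sum_{i=0}^{m-1} d[\phi(i,y,\omega),Q]\geq\varepsilon\geq 1/k$, which violates the uniform-in-$y$-and-$n$ bound required for membership in $EIM_k(Q)$. Hence $EIM_k(Q)=\emptyset$.

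There is no genuine obstacle here; the only mild subtlety is being careful that in the definition of $EIM_k(Q)$ the quantifier ``$\exists\,\delta,\omega$'' comes before ``$\forall\,y,n$'', so negation produces ``$\forall\,\delta,\omega$, $\exists\,y,n$'', which matches the quantifier pattern in the definition of unstable in the mean exactly. The argument therefore consists purely of this quantifier manipulation together with the trivial rescaling $\varepsilon \leftrightarrow 1/k$.
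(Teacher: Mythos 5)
Your proof is correct and is exactly the argument the paper has in mind: the paper states this lemma without proof (``by a direct observation''), and the intended observation is precisely your negation of the membership condition for $EIM_k(Q)$ together with the correspondence $\varepsilon\leftrightarrow 1/k$. The only cosmetic point is the paper's ``$m\in\mathbb{N}_0$'' in the instability definition, which should be read as $m\in\mathbb{N}$ since the Cesaro average is undefined for $m=0$; your argument implicitly does this and is otherwise complete.
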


\begin{thm}
Let $\Sigma=(\mathbb{N}_0,X,U,\mathscr{U},\phi)$ be a system and $Q\subset X$ be a control set with $cl \text{Int}(Q) =cl Q$. Then $Q$ is either equi-invariant in the mean or unstable in the mean.
\end{thm}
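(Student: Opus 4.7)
The plan is to argue by contrapositive: assume $Q$ is not unstable in the mean, and derive that $Q$ is equi-invariant in the mean, i.e.\ that $\mathrm{EIM}(Q)=Q$. The two key ingredients already available are Lemma~\ref{lem:EIMk(Q)-equials-Q} (interior-meeting forces $EIM_k(Q)=Q$) and Lemma~\ref{lem:unstable-mean-iff-EIMK} (instability in the mean is equivalent to the vanishing of some $EIM_k(Q)$), together with the fact, stated just after the definition of $EIM_k(Q)$, that each $EIM_k(Q)$ is open in $Q$.

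The outline goes as follows. First, by the contrapositive of Lemma~\ref{lem:unstable-mean-iff-EIMK}, the assumption that $Q$ is not unstable in the mean yields $EIM_k(Q)\ne\emptyset$ for every $k\in\mathbb{N}$. Second, I would invoke the hypothesis $cl\,\mathrm{Int}(Q)=cl\,Q$ to promote each nonempty open subset of $Q$ to one that meets $\mathrm{Int}(Q)$: if $V$ is open in $Q$, write $V=W\cap Q$ with $W\subset X$ open; from $V\ne\emptyset$ one gets $W\cap cl\,Q\ne\emptyset$, hence $W\cap cl\,\mathrm{Int}(Q)\ne\emptyset$, and since $W$ is open this forces $W\cap \mathrm{Int}(Q)\ne\emptyset$, so $V\cap\mathrm{Int}(Q)\ne\emptyset$. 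Applying this to the open set $EIM_k(Q)$ gives $EIM_k(Q)\cap \mathrm{Int}(Q)\ne\emptyset$ for every $k$.

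Third, Lemma~\ref{lem:EIMk(Q)-equials-Q} then upgrades this to $EIM_k(Q)=Q$ for every $k\in\mathbb{N}$. Since $\mathrm{EIM}(Q)=\bigcap_{k=1}^{\infty} EIM_k(Q)$ (any tolerance $\varepsilon>0$ is dominated by $1/k$ for $k$ large), we conclude $\mathrm{EIM}(Q)=Q$, that is, $Q$ is equi-invariant in the mean. This completes the dichotomy.

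The main technical point is the topological upgrade in the second step; everything else is a direct bookkeeping application of the three earlier lemmas. The no-return property for control sets with nonempty interior (Lemma~\ref{lem:no-return}) is doing the real work behind Lemma~\ref{lem:EIMk(Q)-equials-Q}, so the hypothesis $cl\,\mathrm{Int}(Q)=cl\,Q$ enters twice in spirit: once to ensure $\mathrm{Int}(Q)$ is nonempty (so the no-return property and Lemma~\ref{lem:EIMk(Q)-equials-Q} are applicable), and once to guarantee that the open sets $EIM_k(Q)$ cannot hide outside of $\mathrm{Int}(Q)$.
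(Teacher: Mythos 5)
Your proposal is correct and follows essentially the same route as the paper: both arguments combine Lemma~\ref{lem:unstable-mean-iff-EIMK} (non-instability gives $EIM_k(Q)\neq\emptyset$ for all $k$), the density hypothesis $cl\,\mathrm{Int}(Q)=cl\,Q$ to force $EIM_k(Q)\cap\mathrm{Int}(Q)\neq\emptyset$, and Lemma~\ref{lem:EIMk(Q)-equials-Q} to conclude $EIM_k(Q)=Q$ for every $k$, hence $\mathrm{EIM}(Q)=Q$. The only cosmetic difference is that you argue by contrapositive and invoke the stated openness of $EIM_k(Q)$ in $Q$ abstractly, while the paper runs a contradiction and verifies the interior-meeting step directly with a small ball argument around a point of $EIM_k(Q)$.
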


\begin{proof}
If $Q=EIM(Q)$ then $Q$ is equi-invariant in the mean. If $Q\neq EIM(Q)$, then there exists $k\in\mathbb{N}$ such that $EIM_k(Q)\cap\text{Int}(Q) =\emptyset$ by Lemma~\ref{lem:EIMk(Q)-equials-Q}. To obtain a contradiction, we suppose that $Q$ is not unstable in the mean. By Lemma~\ref{lem:unstable-mean-iff-EIMK}, we have $EIM_k(Q)\neq\emptyset$ for all $k\in\mathbb{N}$. Fix any $k\in\mathbb{N}$ and pick $x\in EIM_k(Q)$. Then there exists $\delta>0$ and $\omega\in\mathscr{U}$ such that
\[\frac1{n}\sum_{i=0}^{n-1}d[\phi(i,y,\omega),Q]<\frac1{k},~\forall~n\in\mathbb{N}~\text{and}~y\in B(x,\delta)\cap Q.\]
Noting that $cl \text{Int}(Q) =cl Q$, we have $B(x,\delta)\cap\text{Int}(Q) \neq\emptyset$. Hence there exist $y\in B(x,\delta)\cap\text{Int}(Q) $ and $\delta'>0$ such that $B(y,\delta')\subset B(x,\delta)\cap\text{Int}(Q) $. So
\[\frac1{n}\sum_{i=0}^{n-1}d[\phi(i,y,\omega),Q]<\frac1{k},~\forall~n\in\mathbb{N}~\text{and}~y\in B(y,\delta')\cap Q  \subset B(x,\delta)\cap Q.\]
This implies that $y\in EIM_k(Q)\cap\text{Int}(Q) $ for all $k\in\mathbb{N}$, which is a contradiction.
\end{proof}

\subsection{The third type of dichotomy theorem}
Finally, we discuss the dichotomy theorem based on mean equi-invariability.

Let
\begin{align*}
\mbox{MEI}_k(Q)=\{x\in Q:&\exists~\delta>0~\text{and}~\omega\in\mathscr{U}~\text{s.t.}\\
&\limsup_{n\rightarrow\infty}\frac1{n}\sum_{i=0}^{n-1}d[\phi(i,y,\omega),Q]<\frac1{k},~\forall~y\in B(x,\delta)\cap Q\}.
\end{align*}
Then $\mbox{MEI}_k(Q)$ is an open subset of $Q$ and $\mbox{MEI}(Q)=\cap_{k=1}^\infty \mbox{MEI}_k(Q)$.

\begin{lem}\label{lem:3-13}
Let $\Sigma=(\mathbb{N}_0,X,U,\mathscr{U},\phi)$ be a system and $Q\subset X$ be a control set.
If $\mbox{MEI}_k(Q)\cap\text{Int}(Q) \neq\emptyset$ for some $k\in\mathbb{N}$, then $\mbox{MEI}_k(Q)=Q$.
\end{lem}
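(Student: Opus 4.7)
The plan is to mirror the proof of Lemma~\ref{lem:EIMk(Q)-equials-Q} almost verbatim, with the uniform-in-$n$ bound on the Ces\`aro average replaced by a $\limsup$ bound. First I would pick $x \in \mbox{MEI}_k(Q) \cap \text{Int}(Q)$ and extract a $\delta > 0$ with $B(x,\delta) \subset \text{Int}(Q)$ together with a control $\omega \in \mathscr{U}$ witnessing $x \in \mbox{MEI}_k(Q)$, i.e.\ $\limsup_{n\to\infty}\frac{1}{n}\sum_{i=0}^{n-1} d[\phi(i,y,\omega),Q] < \frac{1}{k}$ for every $y \in B(x,\delta)$.

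Given any $x' \in Q$, I would invoke the reachability property of the control set $Q$ to find $m \in \mathbb{N}_0$ and $\omega' \in \mathscr{U}$ with $\phi(m, x', \omega') \in B(x, \delta)$. Continuity of $\phi$ supplies some $\delta' > 0$ such that $\phi(m, B(x',\delta'), \omega') \subset B(x, \delta) \subset \text{Int}(Q)$, and Lemma~\ref{lem:no-return} (the no-return property) then forces $\phi([0,m], y, \omega') \subset Q$ for every $y \in B(x', \delta') \cap Q$. I would next assemble the concatenated control $\hat{\omega}$ that applies $\omega'$ on $[0, m)$ and switches to $\omega$ from step $m$ onward, exactly as in Lemma~\ref{lem:EIMk(Q)-equials-Q}. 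For $y \in B(x',\delta') \cap Q$ and $y' := \phi(m, y, \omega') \in B(x, \delta)$, the first $m$ summands $d[\phi(i, y, \hat{\omega}), Q]$ vanish while the tail of the sum coincides with the shifted Ces\`aro sum of the $\omega$-orbit of $y'$.

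The only place where the argument genuinely differs from Lemma~\ref{lem:EIMk(Q)-equials-Q} is the verification that a bounded index shift of size $m$ does not alter the $\limsup$. Since the first $m$ summands are zero and $(n-m)/n \to 1$ as $n \to \infty$, a direct computation yields
\[ \limsup_{n\to\infty}\frac{1}{n}\sum_{i=0}^{n-1} d[\phi(i, y, \hat{\omega}), Q] \;=\; \limsup_{N\to\infty}\frac{1}{N}\sum_{j=0}^{N-1} d[\phi(j, y', \omega), Q] \;<\; \frac{1}{k}, \]
the last inequality coming from $y' \in B(x,\delta)$ and the defining property of $\omega$ at $x$. This shows $x' \in \mbox{MEI}_k(Q)$, and since $x'$ was arbitrary we conclude $\mbox{MEI}_k(Q) = Q$. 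I do not foresee a real obstacle; the only delicate point is the elementary shift observation above, which is needed precisely because the $\limsup$ bound, unlike the ``for every $n$'' bound exploited in Lemma~\ref{lem:EIMk(Q)-equials-Q}, is purely asymptotic.
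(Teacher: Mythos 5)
Your proposal is correct and follows essentially the same route as the paper: pick $x\in \mbox{MEI}_k(Q)\cap\text{Int}(Q)$, steer a neighborhood of an arbitrary $x'\in Q$ into $B(x,\delta)$ using reachability and continuity, invoke the no-return property (Lemma~\ref{lem:no-return}) to keep the first $m$ steps inside $Q$, and conclude with the concatenated control $\hat{\omega}=\omega'\omega^m$. The finite-shift observation you single out is exactly the computation the paper performs in its displayed estimate, so there is no substantive difference.
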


\begin{proof}
Pick $x\in \mbox{MEI}_k(Q)\cap \text{Int}(Q) $. Then there exist $\delta>0$ and $\omega\in\mathscr{U}$ such that for every $y\in B(x,\delta)\subset\text{Int}(Q) $, we have
\[\limsup_{n\rightarrow\infty}\frac1{n}\sum_{i=0}^{n-1}d[\phi(i,y,\omega),Q]<\frac1{k}.\]
For any $x'\in Q$, there exist $m\in\mathbb{N}_0$, $\omega'\in\mathscr{U}$, and $\delta'>0$ such that
\[\phi(m,B(x',\delta'),\omega')\subset B(x,\delta)\subset\text{Int}(Q) .\]
Applying the no-return property again, we have
\[\phi([0,m],B(x',\delta'),\omega')\subset B(x,\delta)\subset\text{Int}(Q).\]
Let $\hat{\omega}=\omega'\omega^m$. Then for any $y\in B(x',\delta')$,
\begin{align*}
\limsup_{n\rightarrow\infty,n>m}\frac1{n}\sum_{i=0}^{n-1}d[\phi(i,y,\hat{\omega}),Q]
=&\limsup_{n\rightarrow\infty,n>m}\frac1{n}\LARGE(\sum_{i=0}^{m}d[\phi(i,y,\hat{\omega}),Q]+\sum_{i=m+1}^{n-1}d[\phi(i,y,\hat{\omega}),Q]\LARGE)\\
=&\limsup_{n\rightarrow\infty,n>m}\frac1{n}\sum_{i=m+1}^{n-1}d[\phi(i,\phi(m,y,\hat{\omega}),\omega),Q])\\
<&\frac{1}{k}.
\end{align*}
So $x'\in \mbox{MEI}_k(Q)$ and $\mbox{MEI}_k(Q)=Q$.
\end{proof}

\begin{cor}\label{cor3.15}
Let $\Sigma=(\mathbb{N}_0,X,U,\mathscr{U},\phi)$ and $Q\subset X$ a compact control set with nonempty interior. Assume that $\mathscr{U}$ is a compact metrizable space and $\phi: \mathbb{N}_0\times X\times\mathscr{U}\to X$ is continuous. Then the following conditions are equivalent.
\begin{enumerate}
  \item $Q$ is mean equi-invariant;
  \item $Q$ is finitely  mean equi-invariant;
  \item $Q$ is finitely mean-L-stable.
\end{enumerate}
\end{cor}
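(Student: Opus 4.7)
The plan is to split the equivalence as $(3)\Leftrightarrow(2)$, $(1)\Rightarrow(2)$, and $(2)\Rightarrow(1)$. The first equivalence is precisely Theorem~\ref{thm:L-stable-equ-inv}, and the implication $(1)\Rightarrow(2)$ is trivial since one may take the finite set $F$ in the definition of FMEI to be a singleton $\{\omega\}$. The real content lies in $(2)\Rightarrow(1)$, which I would prove by a Baire category argument modeled on the proof of Corollary~\ref{cor3.9}, with adjustments needed because the defining inequality for $\mbox{MEI}_k(Q)$ involves a $\limsup$ rather than a finite maximum.

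Fix $k\in\mathbb{N}$. Applying finite mean equi-invariance with tolerance $1/(3k)$ at each $x\in Q$ and covering $Q$ by finitely many of the resulting balls, using compactness of $Q$, I would first produce a single finite set $F_k\subset\mathscr{U}$ such that for every $y\in Q$ there is some $\omega\in F_k$ with
\[
\limsup_{n\to\infty}\frac{1}{n}\sum_{i=0}^{n-1}d(\phi(i,y,\omega),Q)<\frac{1}{3k}.
\]
For $\omega\in F_k$ and $N\in\mathbb{N}$, set
\[
Q_\omega^N:=\Bigl\{y\in Q:\frac{1}{n}\sum_{i=0}^{n-1}d(\phi(i,y,\omega),Q)\leq\frac{1}{2k}\text{ for all }n\geq N\Bigr\},
\]
which is closed in $Q$ by continuity of $\phi$. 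Since $\limsup_n<1/(3k)$ forces $\sup_{n\geq N}\leq 1/(2k)$ for some $N$, this yields the countable closed cover $Q=\bigcup_{\omega\in F_k}\bigcup_{N\in\mathbb{N}}Q_\omega^N$.

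Next, fix an arbitrary nonempty open subset $W\subset Q$. Since $Q$ is a compact metric space, $W$ is locally compact Hausdorff, hence Baire, and I would apply the Baire category theorem to the induced cover $W=\bigcup_{\omega,N}(W\cap Q_\omega^N)$ to extract some $\omega\in F_k$, $N\in\mathbb{N}$, and a nonempty set $V\subset W\cap Q_\omega^N$ open in $W$ (hence in $Q$). For any $z\in V$ there is $\delta>0$ with $B(z,\delta)\cap Q\subset V\subset Q_\omega^N$, so the single control $\omega$ satisfies
\[
\limsup_{n\to\infty}\frac{1}{n}\sum_{i=0}^{n-1}d(\phi(i,y,\omega),Q)\leq\frac{1}{2k}<\frac{1}{k}
\]
for every $y\in B(z,\delta)\cap Q$; that is, $z\in\mbox{MEI}_k(Q)$. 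Consequently $W\cap\mbox{MEI}_k(Q)\neq\emptyset$, so the open set $\mbox{MEI}_k(Q)$ is dense in $Q$.

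A second application of Baire, now to the compact space $Q$, then gives that $\mbox{MEI}(Q)=\bigcap_{k}\mbox{MEI}_k(Q)$ is a dense $G_\delta$ subset of $Q$, so it meets the nonempty open set $\mbox{Int}(Q)$. Picking any $z\in\mbox{MEI}(Q)\cap\mbox{Int}(Q)$, we have $z\in\mbox{MEI}_k(Q)\cap\mbox{Int}(Q)$ for every $k$, and Lemma~\ref{lem:3-13} then forces $\mbox{MEI}_k(Q)=Q$ for every $k$; intersecting over $k$ gives $\mbox{MEI}(Q)=Q$, which is exactly condition~(1). The main technical obstacle I anticipate is precisely the failure of closedness of the natural sublevel sets of a $\limsup$: this is why the slack $1/(3k)<1/(2k)<1/k$ and the auxiliary parameter $N$ are introduced, so as to produce the closed sets $Q_\omega^N$ needed to run the Baire argument.
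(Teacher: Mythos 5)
Your proposal is correct and follows the paper's overall architecture: (2)$\Leftrightarrow$(3) is exactly Theorem~\ref{thm:L-stable-equ-inv}, (1)$\Rightarrow$(2) is immediate, and (2)$\Rightarrow$(1) is obtained by showing each $\mbox{MEI}_k(Q)$ is dense in $Q$ and then combining the Baire category theorem with Lemma~\ref{lem:3-13}. Where you genuinely differ is in how density is produced. The paper covers $Q$ by the finitely many sets $\widetilde{Q}_{\omega^{(i)}}^{\varepsilon}$ (sublevel sets of the $\limsup$ of the Birkhoff-type averages), asserts that these are closed in $Q$, and then invokes the decomposition Lemma 3.4 of \cite{Wang2019dichotomy} to place a relative ball with a single control near a dense set of points; since a $\limsup$ of continuous functions need not be upper semicontinuous, that closedness assertion is precisely the delicate point of the paper's argument. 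You sidestep it by introducing the auxiliary time $N$ and the slack $\frac{1}{3k}<\frac{1}{2k}<\frac{1}{k}$, which yields genuinely closed sets $Q_{\omega}^{N}$ (finite-time conditions intersected over $n\geq N$) at the cost of a countable rather than finite cover, and you then recover density of $\mbox{MEI}_k(Q)$ by a localized Baire argument on an arbitrary nonempty open $W\subset Q$ (legitimate, as an open subset of a compact metric space is a Baire space) instead of the paper's Lemma 3.4. So your route is slightly longer but more robust exactly at the step you flagged, while the paper's is shorter if one grants its closedness claim. One caveat you share with the paper: Theorem~\ref{thm:L-stable-equ-inv} is stated under $\diam(X)<\infty$, which is not among the corollary's hypotheses, so the equivalence (2)$\Leftrightarrow$(3) strictly needs that assumption or a reduction exploiting compactness of $Q$ in both arguments.
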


\begin{proof}
We have show that $(2)$ and $(3)$ are equivalent in Theorem~\ref{thm:L-stable-equ-inv}.
It is clear that $(1)$ implies $(2)$, we only need to prove $(2)$ implies $(1)$.

For every $\epsilon>0$ and $\omega\in\mathscr{U}$, denote
\[\widetilde{Q}_{\omega}^{\varepsilon}:=\{x\in Q:\limsup_{n\rightarrow\infty}\frac1{n}\sum_{i=0}^{n-1}d[\phi(i,x,\omega),Q]\leq\frac{\varepsilon}{2}\}.\]
Let $\epsilon>0$. For every $x\in Q$, by finite  mean equi-invariance of $Q$,
there exist $\delta_x>0$ and $F_x$ such that for any $y\in B(x,\delta_x)\cap Q$, there holds
\[\limsup_{n\rightarrow\infty}\frac1{n}\sum_{i=0}^{n-1}d[\phi(i,x,\omega),Q]<\frac{\epsilon}{2}\]
for some $\omega\in F_x$.
Take $\mathcal {C}:=\{B(x,\delta_x)\}_{x\in Q}$.
Then $\mathcal {C}$ is an open cover of $Q$.
By compactness of $Q$, there exist finite open balls
$$B(x_1,\delta_{x_1}), B(x_2,\delta_{x_2}),\ldots, B(x_r,\delta_{x_r})$$
such that $Q\subset \cup_{i=1}^rB(x_i,\delta_{x_i})$.
Put $F=\cup_{i=1}^rF_{x_i}:=\{\omega^{(i)}:1\leq i\leq\sharp F\}$.
Then $\widetilde{Q}_{\omega^{(i)}}^{\varepsilon}$ is closed in $Q$ for $i=1,\ldots,\sharp F$
 and $Q=\cup_{i=1}^{\sharp F}\widetilde{Q}_{\omega^{(i)}}^{\varepsilon}$.
Let $Q_1':=\widetilde{Q}_{\omega^{(1)}}^{\varepsilon}$ and $Q_i':=cl_Q(\widetilde{Q}_{\omega^{(i)}}^{\varepsilon}\setminus\cup_{j=1}^{i-1}\widetilde{Q}_{\omega^{(j)}}^{\varepsilon})$ for $2\leq i\leq\sharp F$.
Using Lemma 3.4 in~\cite{Wang2019dichotomy} again, we have
\[\bigcup_{i=1}^{\sharp F}Q_i'=Q,~\bigcup_{i=1}^{\sharp F}cl_Q(Q_i'\setminus\cup_{j\neq i}Q_j')=Q.\]
For each $i\in\{1,\ldots,\sharp F\}$ and $x\in Q_i'\setminus\cup_{j\neq i}Q_j'$, there exists $\delta>0$ such that
\[B(x,\delta)\cap Q=B(x,\delta)\cap Q_i'.\]
Therefore, we get
\begin{align*}
B(x,\delta)\cap Q\subset Q_i'\subset \widetilde{Q}_{\omega^{(i)}}^{\varepsilon},
\end{align*}
which implies that
\begin{align*}
\limsup_{n\rightarrow\infty}\frac1{n}\sum_{r=0}^{n-1}d[\phi(r,y,\omega^{(i)}),Q]<\epsilon,~\text{for all}~y\in B(x,\delta)\cap Q.
\end{align*}
Thus
$Q_i'\setminus\cup_{j\neq i}Q_j'\subset \mbox{MEI}_k(Q)$.
It follows that $cl_Q \mbox{MEI}_k(Q)=Q$. According to the Baire category theorem,
we see that $\mbox{MEI}(Q)=\cap_{k=1}^\infty \mbox{MEI}_k(Q)$ is a dense $G_\delta$ subset of $Q$.
Hence $\mbox{MEI}(Q)\cap \text{Int}(Q) \neq\emptyset$.
Pick $x\in \mbox{MEI}(Q)\cap \text{Int}(Q) $.
Using Lemma~\ref{lem:3-13}, we obtain
$\mbox{MEI}(Q)=Q$.
\end{proof}

\begin{defn}
Let $\Sigma=(\mathbb{N}_0,X,U,\mathscr{U},\phi)$ be a system and $Q\subset X$ be a nonempty set. We say that $Q$ is a \emph{mean unstable set} if there exists $\varepsilon>0$ such that for any $x\in Q$, $\delta>0$ and $\omega\in\mathscr{U}$ there exists $y\in B(x,\delta)\cap Q$,
\[\limsup_{n\rightarrow\infty}\frac1{n}\sum_{i=0}^{n-1}d[\phi(i,y,\omega),Q]\geq\varepsilon.\]
\end{defn}
The following Lemma is obvious.
\begin{lem}\label{lem:mean-unstable-iff-EIMK}
Let $\Sigma=(\mathbb{N}_0,X,U,\mathscr{U},\phi)$ be a system.
Then $Q\subset X$ is a mean unstable set  if and only if there exists $k\in\mathbb{N}$ such that $\mbox{MEI}_k(Q)=\emptyset$.
\end{lem}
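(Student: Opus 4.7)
The plan is to prove this equivalence by pure quantifier manipulation: writing out the negation of ``$x\in\mbox{MEI}_k(Q)$'' and observing that it matches the instability condition at scale $\varepsilon=1/k$. So really no dynamics is needed, only careful unfolding of the definitions introduced just above.

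For the direction ($\Leftarrow$), suppose $\mbox{MEI}_k(Q)=\emptyset$ for some $k\in\mathbb{N}$. I would set $\varepsilon=1/k$ and pick any $x\in Q$, $\delta>0$, $\omega\in\mathscr{U}$. Since $x\notin\mbox{MEI}_k(Q)$, the existence of $\delta,\omega$ witnessing $\mbox{MEI}_k$-membership must fail \emph{for this particular choice} of $\delta$ and $\omega$. Unwrapping the defining formula of $\mbox{MEI}_k(Q)$, this yields some $y\in B(x,\delta)\cap Q$ with
\[
\limsup_{n\to\infty}\frac{1}{n}\sum_{i=0}^{n-1} d[\phi(i,y,\omega),Q]\geq\frac{1}{k}=\varepsilon,
\]
which is exactly mean instability.

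For ($\Rightarrow$), assume $Q$ is mean unstable with witness $\varepsilon>0$. Choose $k\in\mathbb{N}$ so that $1/k\leq\varepsilon$. Then for every $x\in Q$, every $\delta>0$, and every $\omega\in\mathscr{U}$ there exists $y\in B(x,\delta)\cap Q$ with the mean-displacement $\limsup$ at least $\varepsilon\geq 1/k$. This directly negates the membership clause of $\mbox{MEI}_k(Q)$ for every $x\in Q$, so $\mbox{MEI}_k(Q)=\emptyset$.

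There is no real obstacle here; the only thing to be careful about is that the quantifiers ``$\exists\,\delta>0,\ \exists\,\omega\in\mathscr{U},\ \forall\,y$'' in the definition of $\mbox{MEI}_k$ negate to ``$\forall\,\delta,\ \forall\,\omega,\ \exists\,y$'', which is the precise form appearing in the definition of mean unstable set. Thus the lemma is genuinely a one-line consequence of the definitions, and I would simply present it as two short paragraphs corresponding to the two implications.
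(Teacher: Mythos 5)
Your argument is correct: the careful negation of the quantifiers in the definition of $\mbox{MEI}_k(Q)$, with $\varepsilon=1/k$ in one direction and $1/k\leq\varepsilon$ in the other, is exactly what is needed. The paper simply declares this lemma obvious and gives no proof, so your write-up is precisely the intended definition-unfolding argument, just made explicit.
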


\begin{thm}
Let $\Sigma=(\mathbb{N}_0,X,U,\mathscr{U},\phi)$ be a system and $Q\subset X$ be a control set with $cl \text{Int}(Q) =cl Q$.
Then $Q$ is either mean equi-invariant or mean unstable.
\end{thm}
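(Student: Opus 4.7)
The plan is to mimic the dichotomy argument already carried out for equi-invariance in the mean (the second type of dichotomy), replacing the uniform Ces\`aro bound $\frac{1}{n}\sum d[\phi(i,\cdot,\omega),Q] < \frac{1}{k}$ for all $n$ by its $\limsup$ version. First I would dispose of the easy alternative: if $\mbox{MEI}(Q)=Q$, then $Q$ is by definition mean equi-invariant and there is nothing to prove. So from now on assume $\mbox{MEI}(Q)\neq Q$. Since $\mbox{MEI}(Q)=\bigcap_{k=1}^\infty \mbox{MEI}_k(Q)$, there exists some $k\in\mathbb{N}$ with $\mbox{MEI}_k(Q)\neq Q$, and then Lemma~\ref{lem:3-13} forces $\mbox{MEI}_k(Q)\cap\text{Int}(Q)=\emptyset$ (otherwise it would equal $Q$).

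Next, argue by contradiction: suppose $Q$ is also not mean unstable. By Lemma~\ref{lem:mean-unstable-iff-EIMK}, this means $\mbox{MEI}_j(Q)\neq\emptyset$ for every $j\in\mathbb{N}$, in particular for the $k$ above. Pick $x\in \mbox{MEI}_k(Q)$ together with a witnessing pair $(\delta,\omega)$, so that
\[\limsup_{n\to\infty}\frac1{n}\sum_{i=0}^{n-1}d[\phi(i,z,\omega),Q]<\frac1{k}\qquad\text{for all } z\in B(x,\delta)\cap Q.\]
Now exploit the hypothesis $cl\,\text{Int}(Q)=cl\,Q$: since $x\in Q\subset cl\,\text{Int}(Q)$, the ball $B(x,\delta)$ meets $\text{Int}(Q)$, so I can choose $y\in B(x,\delta)\cap\text{Int}(Q)$ and then a $\delta'>0$ small enough that $B(y,\delta')\subset B(x,\delta)\cap\text{Int}(Q)$. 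For every $z\in B(y,\delta')\cap Q$ one has $z\in B(x,\delta)\cap Q$, so the displayed $\limsup$ bound holds at $z$. This exhibits $y$ as a point of $\mbox{MEI}_k(Q)\cap\text{Int}(Q)$, contradicting the conclusion of the first paragraph.

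The argument is formal and the main obstacle is conceptual rather than technical: one must verify that the same pair $(\delta,\omega)$ witnessing membership of $x$ in $\mbox{MEI}_k(Q)$ transfers cleanly to a witness for any $y$ whose ball sits inside $B(x,\delta)$. This transfer is immediate because the defining inequality for $\mbox{MEI}_k(Q)$ is quantified over \emph{every} point of a neighborhood intersected with $Q$, and is not tied to the center $x$ itself; it is therefore automatically inherited by any smaller neighborhood. With this observation the proof is parallel to the corresponding statement for the $EIM$ dichotomy, with Lemma~\ref{lem:3-13} playing the role that Lemma~\ref{lem:EIMk(Q)-equials-Q} played there, and Lemma~\ref{lem:mean-unstable-iff-EIMK} the role of Lemma~\ref{lem:unstable-mean-iff-EIMK}.
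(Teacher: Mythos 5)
Your proposal is correct and follows essentially the same route as the paper's own proof: reduce to the case $\mbox{MEI}(Q)\neq Q$, use Lemma~\ref{lem:3-13} to get $\mbox{MEI}_k(Q)\cap\text{Int}(Q)=\emptyset$, use Lemma~\ref{lem:mean-unstable-iff-EIMK} to get $\mbox{MEI}_k(Q)\neq\emptyset$ under the assumption that $Q$ is not mean unstable, and then use $cl\,\text{Int}(Q)=cl\,Q$ to transfer the witnessing pair $(\delta,\omega)$ to an interior point, yielding the contradiction. The only (cosmetic) difference is that you work directly with the single offending index $k$, whereas the paper quantifies over all $k$ before contradicting the chosen one.
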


\begin{proof}
If $Q=\mbox{MEI}(Q)$ then $Q$ is mean equi-invariant.
If $Q\neq \mbox{MEI}(Q)$,
then there exists $k\in\mathbb{N}$ such that $\mbox{MEI}_k(Q)\cap\text{Int}(Q) =\emptyset$ by Lemma~\ref{lem:3-13}.
Suppose in contrast that $Q$ is not mean unstable. By Lemma~\ref{lem:mean-unstable-iff-EIMK},
we have $\mbox{MEI}_k(Q)\neq\emptyset$ for all $k\in\mathbb{N}$.
Fix any $k\in\mathbb{N}$ and pick $x\in \mbox{MEI}_k(Q)$.
Then there exist $\delta>0$ and $\omega\in\mathscr{U}$ such that
\[\limsup_{n\rightarrow\infty}\frac1{n}\sum_{i=0}^{n-1}d[\phi(i,y,\omega),Q]<\frac1{k},~\forall~y\in B(x,\delta)\cap Q.\]
By $cl \text{Int}(Q) =cl Q$, it follows that $B(x,\delta)\cap\text{Int}(Q) \neq\emptyset$.
Consequently, there exist $y\in B(x,\delta)\cap\text{Int}(Q) $ and $\delta'>0$ such that $B(y,\delta')\subset B(x,\delta)\cap\text{Int}(Q) $. So
\[\limsup_{n\rightarrow\infty}\frac1{n}\sum_{i=0}^{n-1}d[\phi(i,y,\omega),Q]<\frac1{k},~\forall~y\in B(y,\delta')\cap Q  \subset B(x,\delta)\cap Q,\]
which implies that $y\in \mbox{MEI}_k(Q)\cap\text{Int}(Q) $ for all $k\in\mathbb{N}$.
This is a contradiction.
\end{proof}

\section{Appendix}

\begin{exam}[FEI but not EI; FEIM but not EIM]\label{Exa:FEI-n-EI}
Consider a control system of form ~(\ref{eq:control-system-modle}), where
\begin{enumerate}
\item  $X=[0,1]=\{x\in\mathbb{R}:0\leq x \leq 1\}$;
\item  $U=\{0,1\}$;
\item  $F_0, F_1: X\rightarrow X$ are defined by
\begin{equation*}
F_0(x)=
\begin{cases}
x, & \text {if $0\leq x<\frac{3}{8},$}\\
5(x-\frac{1}{2})+1, & \text {if $\frac{3}{8}\leq x<\frac{1}{2},$}\\
1, & \text {if $\frac{1}{2}\leq x\leq 1.$}
\end{cases}
\end{equation*}
and
\begin{equation*}
F_1(x)=
\begin{cases}
1, & \text {if $0\leq x<\frac{1}{4},$}\\
-4(x-\frac{1}{4})+1, & \text {if $\frac{1}{4}\leq x<\frac{3}{8},$}\\
\frac{1}{2}, & \text {if $\frac{3}{8}\leq x<\frac{1}{2},$}\\
4(x-\frac{5}{8})+1, & \text {if $\frac{1}{2}\leq x<\frac{5}{8},$}\\
1, & \text {if $\frac{5}{8}\leq x\leq 1.$}
\end{cases}
\end{equation*}
\end{enumerate}
Let $Q=[\frac{1}{4},\frac{1}{2}]$.
Then $Q$ is finitely equi-invariant but not equi-invariant.
\end{exam}

\begin{figure}[H]
  \centering
\begin{tikzpicture}[line width=1pt,line cap=round,line join=round,x=1.0cm,y=1.0cm,scale=0.618]
\clip(-1.5,-1.5) rectangle (9,9);
\draw (0,0)node[below]{$0$}--(2,0)node[below]{$\frac14$}--(3,0)node[below]{$\frac38$}
--(4,0)node[below]{$\frac12$}--(5,0)node[below]{$\frac58$}--(8,0)node[below]{$1$}--(8,8)--(0,8)--cycle;
\draw (0,0)--(3,3)--(4,8)--(8,8);
\draw[dashed,color=red] (0,8)--(2,8)--(3,4)--(4,4)--(5,8)--(8,8);
\draw[line width=2pt,color=blue] (2,0)--(4,0);
\end{tikzpicture}
  \caption{Finitely equi-invariant.}\label{F-E-I}
\end{figure}

\begin{proof}
We divide our proof into three claims.

\begin{clm}\label{claim:2-1}
$Q$ is finitely equi-invariant.
\end{clm}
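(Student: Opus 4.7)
The plan is to show that two constant controls are enough, namely $\omega^0 := (0,0,0,\ldots)$ and $\omega^1 := (1,1,1,\ldots)$, and that orbits from a suitable neighborhood of each $x \in Q$ under at least one of these controls stay inside $Q$ itself (so in particular inside $B_\varepsilon(Q)$ for any $\varepsilon > 0$). The essential mechanism is read off the explicit formulas: on the left half of $Q$, $F_0$ is the identity, while on the right half of $Q$, $F_1$ is the constant $\tfrac12$, and $F_1(\tfrac12)=\tfrac12$. Precisely, I record the two facts
\begin{align*}
&\text{(i) } F_0(y)=y \text{ for all } y\in [\tfrac14,\tfrac38], \text{ so } \phi(n,y,\omega^0)=y \in Q \text{ for every } n\in\N_0, \\
&\text{(ii) } F_1(y)=\tfrac12 \text{ for all } y\in[\tfrac38,\tfrac12], \text{ and } F_1(\tfrac12)=\tfrac12, \text{ so } \phi(n,y,\omega^1)\in Q \text{ for every } n\in\N_0.
\end{align*}

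Then I split into three cases according to the location of $x$ in $Q=[\tfrac14,\tfrac12]$. If $x\in[\tfrac14,\tfrac38)$, I take $\delta=(\tfrac38-x)/2$ and $F=\{\omega^0\}$, so that $B(x,\delta)\cap Q\subset[\tfrac14,\tfrac38)$ and fact (i) gives the required invariance. If $x\in(\tfrac38,\tfrac12]$, I take $\delta=(x-\tfrac38)/2$ and $F=\{\omega^1\}$, so that $B(x,\delta)\cap Q\subset(\tfrac38,\tfrac12]$ and fact (ii) applies (with $\phi(0,y,\omega^1)=y\in Q$ and $\phi(n,y,\omega^1)=\tfrac12$ for $n\ge 1$). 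Finally, the transition point $x=\tfrac38$ is where two controls become genuinely necessary: for any $\delta>0$, set $F=\{\omega^0,\omega^1\}$; then for $y\in B(\tfrac38,\delta)\cap Q$ with $y\le\tfrac38$ I use (i) with $\omega^0$, and for $y$ with $y\ge\tfrac38$ I use (ii) with $\omega^1$.

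There is essentially no analytic obstacle, since the orbits are computed in closed form from the piecewise-linear definitions of $F_0,F_1$. The only conceptual point to make explicit is that the choice of control cannot be made uniformly on a neighborhood of $\tfrac38$ (because $\omega^0$ fails for $y$ slightly above $\tfrac38$: one can check $F_0(y)=5(y-\tfrac12)+1$ leaves $Q$ and escapes to $1$, while $\omega^1$ fails for $y$ slightly below $\tfrac38$: $F_1(y)=-4(y-\tfrac14)+1$ lies above $\tfrac12$). This is precisely why $F$ must be allowed to contain two elements at $x=\tfrac38$, and this observation will also be what the subsequent Claim uses to show $Q$ is \emph{not} equi-invariant. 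Thus the whole argument for Claim~\ref{claim:2-1} reduces to a tidy case analysis with $\#F\le 2$ uniformly in $\varepsilon$, which is stronger than what the definition of finite equi-invariance requires.
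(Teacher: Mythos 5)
Your proof is correct and follows essentially the same route as the paper: both use the two constant controls $0^\infty$ and $1^\infty$, with $F_0$ fixing every point of $[\tfrac14,\tfrac38]$ (note $F_0(\tfrac38)=5(\tfrac38-\tfrac12)+1=\tfrac38$) and $F_1$ sending $[\tfrac38,\tfrac12]$ to the fixed point $\tfrac12$, so orbits stay in $Q$ itself. Your extra case analysis showing a single control suffices away from $x=\tfrac38$ is a harmless refinement that the paper records separately in the remark following the example.
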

\begin{proof}[Proof of Claim~\ref{claim:2-1}]
Fix any $x\in Q$.
For any $\epsilon>0$, choose $\delta=\epsilon$
and $\mathcal{F}=\{\omega:=0^\infty,\widehat{\omega}:=1^\infty\}$,
then by the definitions of $F_0$ and $F_1$, for any $y\in B(x,\delta)\cap Q$ there holds:
$\phi(\mathbb{N},x,\omega)=x\in Q$ if $\frac{1}{4}\leq y\leq \frac{3}{8}$;
and $\phi(\mathbb{N},x,\widehat{\omega})=\frac{1}{2}\in Q$ if $\frac{3}{8}< y\leq \frac{1}{2}$.
\end{proof}

\begin{clm}\label{claim:2-2}
For any $\frac{1}{4}\leq x< \frac{3}{8}$ and any control sequence $\widehat{\omega}\in \{0^n1^\infty: n\geq 1\}\cup\{\omega: \omega_i\omega_{i+1}=10, \text{~for some~} i\geq 0\}\cup\{1^\infty \}$,
there holds $\lim_{n\rightarrow\infty}\phi(n,x,\widehat{\omega})=1$.
\end{clm}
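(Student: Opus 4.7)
The plan is to extract the essential structural facts from the piecewise-affine definitions of $F_0$ and $F_1$, and then reduce the claim to a short case analysis. From the formulas I will record: (a) $F_0$ is the identity on $[0,3/8)$; (b) $F_0\equiv 1$ on $[1/2,1]$; (c) on $[1/4,3/8)$, $F_1$ is the affine map $x\mapsto 2-4x$, whose image lies in $(1/2,1]$; (d) $F_1\equiv 1$ on $[5/8,1]$; (e) for $y\in[1/2,5/8)$, $F_1(y)=4y-3/2$, so that $F_1(y)-1/2=4(y-1/2)$, and in particular $F_1([1/2,1])\subset[1/2,1]$; and (f) $F_0(1)=F_1(1)=1$. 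Fact~(f) shows that $1$ is a common fixed point of both $F_0$ and $F_1$, so once the orbit reaches $1$ it remains there under any subsequent control; hence it will suffice to prove that the orbit hits $1$ after finitely many steps.

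The argument then splits according to which of the three subsets of $\mathscr{U}$ contains $\widehat{\omega}$. First, for $\widehat{\omega}\in\{1^\infty\}\cup\{0^n1^\infty:n\geq 1\}$, facts~(a) and~(c) give $\phi(n,x,\widehat{\omega})=x$ and $y_1:=\phi(n+1,x,\widehat{\omega})=2-4x\in(1/2,1]$. If $y_1=1$ or $y_1\in[5/8,1)$, a single further application of $F_1$ sends the orbit to $1$ by~(d). Otherwise $y_1\in(1/2,5/8)$, and iterating~(e) yields $y_k-1/2=4^{k-1}(y_1-1/2)$ as long as $y_1,\dots,y_{k-1}$ all remain in $[1/2,5/8)$; since $y_1-1/2>0$ this forces $y_k\geq 5/8$ after finitely many iterations, whereupon~(d) ends the argument.

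In the remaining case $\widehat{\omega}$ contains the pattern $10$; let $i^\ast$ be the smallest index with $\omega_{i^\ast}\omega_{i^\ast+1}=10$. Minimality of $i^\ast$ forces the prefix $\omega_0\cdots\omega_{i^\ast}$ to have the shape $0^a1^b$ with $a\geq 0$, $b\geq 1$, and $a+b=i^\ast+1$. The first $a$ steps fix $x$ by~(a); the next $b$ applications of $F_1$ first carry the orbit into $(1/2,1]$ via~(c) and then keep it in $[1/2,1]$ by~(d) and~(e); finally, at time $i^\ast+2$ we apply $F_0$ to a point of $[1/2,1]$, landing on $1$ by~(b), and~(f) pins the orbit at $1$ thereafter. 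The only mildly delicate point in the whole proof is the escape-from-$[1/2,5/8)$ argument in the first case; this follows from the factor-$4$ expansion of~(e) around the fixed point $1/2$, combined with the strict inequality $F_1(x)>1/2$ guaranteed by $x<3/8$. Everything else is a direct substitution into the piecewise-affine formulas.
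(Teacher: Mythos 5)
Your proof is correct and follows essentially the same route as the paper: a case split over the three families of control sequences, using that $F_1$ maps $[\tfrac14,\tfrac38)$ into $(\tfrac12,1]$, expands $[\tfrac12,\tfrac58)$ away from $\tfrac12$ by a factor of $4$, and that $1$ is a common fixed point of $F_0$ and $F_1$. The only notable difference is in the ``$10$-pattern'' case, where the paper dispenses with your minimal-index and $0^a1^b$-prefix analysis by simply observing that $F_0\circ F_1$ sends every point of $[0,1]$ above $\tfrac12$ (in fact to $1$), so whatever happens before the pattern occurs is irrelevant.
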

\begin{proof}[Proof of Claim~\ref{claim:2-2}]
Let $y\in [\frac{1}{4}, \frac{3}{8})$ and a control sequence $\widehat{\omega}\in \{\omega: \omega_i\omega_{i+1}=10, \text{~for some~} i\geq 0\}\cup\{1^\infty \}\cup \{0^n1^\infty: n\geq 1\}$. Then we have the following cases.

Case 1. $\widehat{\omega}=1^\infty$. Then $F_{\omega_0}(y)>\frac{1}{2}$.
So by the definition of $F_1$, we have $\lim_{n\rightarrow\infty}\phi(n,x,\widehat{\omega})=1$.

Case 2. $\widehat{\omega}\in \{\omega: \omega_i\omega_{i+1}=10, \text{~for some~} i\geq 0\}$.
Note that $F_0\circ F_1(z)>\frac{1}{2}$ for all $z\in [0,1]$.
This implies that $\lim_{n\rightarrow\infty}\phi(n,x,\widehat{\omega})=1$.

Case 3. $\widehat{\omega}\in \{0^n1^\infty: n\geq 1\}$.
Note that for any $n\geq 1$,
$$F_1\circ \underbrace{F_0\circ\cdots\circ F_0}_{n-\text{times}}(z)=F_1(z)>\frac{1}{2} \text{~ for all ~}z\in [\frac{1}{4},\frac{3}{8}).$$
Thus $\lim_{n\rightarrow\infty}\phi(n,x,\widehat{\omega})=1$.
\end{proof}

\begin{clm}\label{claim:2-3}
The point $\frac{3}{8}$ is not equi-invariant.
\end{clm}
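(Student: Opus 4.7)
The plan is to fix $\varepsilon\in(0,\tfrac{1}{2})$ so that $1\notin B_\varepsilon(Q)$, and to show that for every $\delta>0$ and every $\omega\in\mathscr{U}$ I can locate a point $y\in B(\tfrac{3}{8},\delta)\cap Q$ and some $m\in\mathbb{N}_0$ with $\phi(m,y,\omega)=1$. This would directly contradict the existence of a single control witnessing equi-invariance at $\tfrac{3}{8}$. I would split the argument into two cases according to whether $\omega$ ever uses the symbol $1$.

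\textbf{Case A: $\omega=0^\infty$.} Here I pick $y$ slightly to the right of $\tfrac{3}{8}$, namely $y\in(\tfrac{3}{8},\tfrac{3}{8}+\delta)\cap Q$, and exploit the fact that $F_0$ is affine with slope $5$ on $[\tfrac{3}{8},\tfrac{1}{2})$ and fixes $\tfrac{3}{8}$. Setting $s_n:=F_0^{n}(y)-\tfrac{3}{8}$, one has $s_{n+1}=5\,s_n$ as long as $s_n<\tfrac{1}{8}$, so $s_n$ grows geometrically until $F_0^{n}(y)\geq\tfrac{1}{2}$, after which the next application of $F_0$ produces $1$.

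\textbf{Case B: $\omega$ contains a $1$.} Let $k:=\min\{i:\omega_i=1\}$ and pick $y\in(\tfrac{3}{8}-\delta,\tfrac{3}{8})\cap Q$. Because $F_0$ is the identity on $[\tfrac{1}{4},\tfrac{3}{8})$, we have $\phi(j,y,\omega)=y$ for $0\leq j\leq k$, and then $\phi(k+1,y,\omega)=F_1(y)=2-4y\in(\tfrac{1}{2},1]$. From any state $z\in(\tfrac{1}{2},1]$, the subsequent dynamics must reach $1$ in finitely many further steps: any control value $0$ sends $z$ to $1$ at once via $F_0$, while a control value $1$ either keeps $z$ in $(\tfrac{1}{2},\tfrac{5}{8})$ but multiplies the distance $z-\tfrac{1}{2}$ by the factor $4$ (from the formula $F_1(z)=4z-\tfrac{3}{2}$), or, once the state enters $[\tfrac{5}{8},1]$, sends it directly to $1$.

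The main obstacle I anticipate is precisely this continuation analysis inside Case B: I must rule out that some clever tail of $\omega$ keeps the orbit inside $B_\varepsilon(Q)$ after it has entered $(\tfrac{1}{2},1]$. The observation that resolves this is that, on $(\tfrac{1}{2},1]$, both $F_0$ and $F_1$ are either constant equal to $1$ or strictly expanding away from the fixed value $\tfrac{1}{2}$; consequently no controlled-invariant subset can sit inside $(\tfrac{1}{2},1)$ while avoiding $1$, so the geometric growth of $z-\tfrac{1}{2}$ forces the orbit to exit through $\tfrac{5}{8}$ in finitely many steps and then land on $1$.
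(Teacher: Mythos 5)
Your proof is correct and follows essentially the same dichotomy as the paper: any control containing a $1$ is defeated by points just to the left of $\tfrac38$ (the orbit is forced to $1$ once it enters $(\tfrac12,1]$), while $\omega=0^\infty$ is defeated by points just to the right, where the slope-$5$ branch of $F_0$ expels the orbit. The only difference is cosmetic: you re-derive the left-hand case directly via the quadrupling of $z-\tfrac12$ instead of citing Claim~\ref{claim:2-2}'s partition of control sequences, which is a perfectly valid (and self-contained) substitute.
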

\begin{proof}[Proof of Claim~\ref{claim:2-3}]
Suppose in contrast that $\frac{3}{8}$ is equi-invariant.
Then for any $\epsilon>0$ there exist $\delta>0$ and $\omega\in U^\mathbb{N}$ such that
\[\phi(\mathbb{N},y,\omega)\subset B(Q,\epsilon),\]
for all $y\in B(x,\delta)\cap Q$.
Since
$$U^\mathbb{N}=\{0^n1^\infty: n\geq 1\}\cup\{\omega: \omega_i\omega_{i+1}=10, \text{~for some~} i\geq 0\}\cup\{0^\infty, 1^\infty \},$$
by Claim~\ref{claim:2-2} and the proof of
Claim~\ref{claim:2-1}, we have $\omega=0^\infty$.
However, for any $z\in (\frac{3}{8}, \frac{1}{2}]$, we have $\lim_{n\rightarrow\infty}F_0^n(z)=1$;
this implies that $\lim_{n\rightarrow\infty}\phi(n,z,\omega)=1$.
This is a contradiction.
Therefore $\frac{3}{8}$ is not equi-invariant.
\end{proof}

By Claims \ref{claim:2-1} and ~\ref{claim:2-3},
the set $Q$ is finitely equi-invariant but not equi-invariant.

\end{proof}

\begin{rem}
By Claim~\ref{claim:2-3} and the proof of Claim~\ref{claim:2-1},
one can see that  every point in $Q$ is equi-invariant except the point $\frac{3}{8}$.
\end{rem}

The following example shows that  the equi-invariance
in the mean is strictly weaker than the equi-invariance.

\begin{exam}[EIM but not EI]\label{Exa:EIM-n-EI}
Consider a control system of form ~(\ref{eq:control-system-modle}), where
\begin{enumerate}
\item  $X=[0,1]=\{x\in\mathbb{R}:0\leq x \leq 1\}$;
\item  $U=\{0,1,2\}$;
\item  $F_0, F_1$ and $F_2: X\rightarrow X$ are defined by
\begin{equation*}
F_0(x)=
\begin{cases}
x, & \text {if $0\leq x<\frac{3}{8},$}\\
5(x-\frac{1}{2})+1, & \text {if $\frac{3}{8}\leq x<\frac{1}{2},$}\\
1, & \text {if $\frac{1}{2}\leq x< \frac{5}{8},$}\\
-5(x-\frac{3}{4})+\frac{3}{8}, & \text {if $\frac{5}{8}\leq x< \frac{3}{4},$}\\
\frac{3}{8}, & \text {if $\frac{3}{4}\leq x\leq 1,$}
\end{cases}
\end{equation*}
\begin{equation*}
F_1(x)=
\begin{cases}
1, & \text {if $0\leq x<\frac{1}{4},$}\\
-4(x-\frac{1}{4})+1, & \text {if $\frac{1}{4}\leq x<\frac{3}{8},$}\\
\frac{1}{2}, & \text {if $\frac{3}{8}\leq x<\frac{1}{2},$}\\
4(x-\frac{5}{8})+1, & \text {if $\frac{1}{2}\leq x<\frac{5}{8},$}\\
1, & \text {if $\frac{5}{8}\leq x\leq 1;$}
\end{cases}
\end{equation*}
and $F_2(x)=1$ for all $x\in [0,1].$
\end{enumerate}
Let $Q=[\frac{1}{4},\frac{1}{2}]$.
Then the set $Q$ is equi-invariant in the mean but not equi-invariant.
\end{exam}

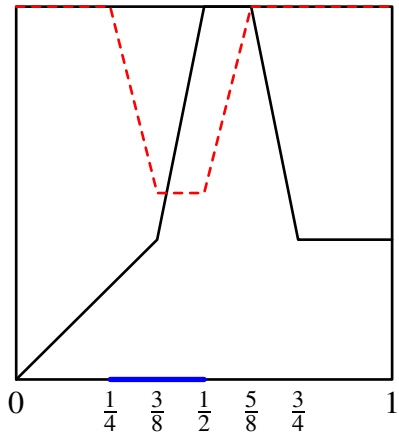
\begin{figure}[H]
  \centering
\begin{tikzpicture}[line width=1pt,line cap=round,line join=round,x=1.0cm,y=1.0cm,scale=0.618]
\clip(-1.5,-1.5) rectangle (9,9);
\draw (0,0)node[below]{$0$}--(2,0)node[below]{$\frac14$}--(3,0)node[below]{$\frac38$}
--(4,0)node[below]{$\frac12$}--(5,0)node[below]{$\frac58$}--(6,0)node[below]{$\frac34$}--(8,0)node[below]{$1$}--(8,8)--(0,8)--cycle;
\draw (0,0)--(3,3)--(4,8)--(5,8)--(6,3)--(8,3);
\draw[dashed,color=red] (0,8)--(2,8)--(3,4)--(4,4)--(5,8)--(8,8);
\draw[line width=2pt,color=blue] (2,0)--(4,0);
\end{tikzpicture}
  \caption{Equi-invariant in the mean.}\label{E-I-in-the-mean}
\end{figure}

\begin{proof}
Similar to the proof of Example~\ref{Exa:FEI-n-EI},
 almost all points in $Q$ are  equi-invariant except $\frac{3}{8}$.
So it suffices to show that $\frac{3}{8}$ is equi-invariant in the mean.
Indeed, for any $0<\epsilon<\frac{1}{8}$,
choose $N>0$ such that $\frac{1}{2N}<\varepsilon$ and $0<\delta'<\frac{1}{8}$ with
\begin{itemize}
\item $F^N_0(\frac{3}{8}+\delta')=\frac{1}{2}$ and
\item $F^i_0(\frac{3}{8}+\delta')<\frac{1}{2}$, $i=0,1,\ldots,N-1$.
\end{itemize}
Pick $\delta=\min\{\delta',\varepsilon\}$ and two control sequences $\omega=0^N20^\infty$,
then for any $y\in B(\frac{3}{8},\delta)$, there holds
\begin{itemize}
\item $F_{\omega_n}\circ\ldots\circ F_{\omega_0}(y)\in Q$ whenever $n=0,1,\ldots, N-1$;
\item $F_{\omega_N}\circ\ldots\circ F_{\omega_0}(y)\equiv 1$, this implies that
$$\frac1{N}\sum_{i=0}^{N}d[\phi(i,y,\omega),Q]
=\frac1{N}\sum_{i=0}^{N-1}d[\phi(i,y,\omega),Q]+\frac{d[\phi(N,y,\omega),Q]}{N}
=0+\frac{1}{2N}<\varepsilon;$$
\item $F_{\omega_n}\circ\ldots\circ F_{\omega_0}(y)\equiv\frac{3}{8}\in Q$ whenever $n>N$, this implies that
 \begin{align*}
 \frac1{n}\sum_{i=0}^{n}d[\phi(i,y,\omega),Q]
=&\frac1{n}\sum_{i=0}^{N-1}d[\phi(i,y,\omega),Q]+\frac{d[\phi(N,y,\omega),Q]}{N}+\frac1{n}\sum_{i=N+1}^{n}d[\phi(i,y,\omega),Q]\\
=&0+\frac{1}{2N}+0<\varepsilon,
 \end{align*}
 for all $n>N$.
\end{itemize}
Thus the point $\frac{3}{8}$ is equi-invariant in the mean.
\end{proof}

Before we give the forthcoming example, we recall some notions.
Let $I$ be a finite set. The one-sided symbolic space is
\[I^{\mathbb{N}_0} = \{x = (x_0, x_1,\ldots) : x_i \in I \text{~for~} i \in \mathbb{N}_0\}\]
with the distance
\begin{equation*}
\rho(x,y)=
\begin{cases}
0, & \text {if $x=y,$}\\
\frac1{i+1}, & \text {if $x\neq y$ and $i=\min\{j:x_j\neq y_j\}$}\\
\end{cases}
\end{equation*}
The \emph{shift} map $\sigma: I^{\mathbb{N}_0}\rightarrow I^{\mathbb{N}_0}$ is defined as
\[x = (x_0, x_1, \ldots)\mapsto \sigma(x) = (x_1, x_2, \ldots).\]
Then $(I^{\mathbb{N}_0}, \sigma)$ is a full shift.
For $\omega\in I^n$, the \emph{length} of $\omega$ is $l(\omega) = n$.
A \emph{cylinder} of $\omega$ is $[\omega] =\{x \in I^{\mathbb{N}_0}: (x_0,\ldots, x_{n-1}) = \omega\}$.

Now we give the following example to show that there exists a
 set which is  finitely equi-invariant in the mean but not finitely equi-invariant.

\begin{exam}[FEIM but not FEI]\label{Exa:EIM-n-FEI}
Let $I = \{a, b, c, d, e\}$,
$A = ab$, $B = cde$ and $\mathscr{B} = \{A, B\}$.
Consider a control system of form ~(\ref{eq:control-system-modle}), where
\begin{enumerate}
\item  $X = I^{\mathbb{N}_0}$;
\item  $U=\{0,1,2,3\}$;
\item  $ F_0$, $F_1,F_2$ and  $F_3: X\rightarrow X$ are defined by $F_0=\sigma^2$, $F_1=\sigma^3$,
$F_2\equiv b^\infty$ and
\begin{equation*}
F_3(x)=\begin{cases}
(ab)^\infty, & \text {if $x\in[b],$}\\
b^\infty, & \text {otherwise.}\\
\end{cases}
\end{equation*}
\end{enumerate}
Define an injective map
$\varphi$ from $\mathscr{B}^{\mathbb{N}_0}$ to $I^{\mathbb{N}_0}$ by
\[\varphi(\mu)_{[\sum_{j=0}^{i-1}l(\mu_i),\sum_{j=0}^il(\mu_i))}=\mu_i,\]
for $\mu\in \mathscr{B}^{\mathbb{N}_0}$.
Let $Q=\varphi(\mathscr{B}^{\mathbb{N}_0})$.
Then the set $Q$ is  equi-invariant in the mean but not finitely equi-invariant.
\end{exam}

\begin{proof}
By the construct of $Q$ and definitions of $F_0, F_1,F_2,F_3$, the set $Q$ is compact and
for any $x\in Q$ there exists an unique control sequence $\omega\in\mathscr{U}$ such that
$\phi(\mathbb{N}_0,x,\omega)\subset Q$. This implies that $Q$ is not finitely equi-invariant.

Next, we show that $Q$ is  equi-invariant in the mean.
Let $x\in Q$.
Fix any positive real number $\epsilon>0$.
Choose an integer with $n>\frac{1}{\epsilon}$.
Since the topology of the subspace $Q$ is
\begin{align*}
\mathscr{T}_Q=\{[u]:u\in I^n, n\geq 0\}\cap Q
=\{[(ab)^{n_1}(cde)^{m_1}&(ab)^{n_2}(cde)^{m_2}\cdots(ab)^{n_k}(cde)^{m_k}]:\\
&n_1+m_1+\cdots+n_k+m_k\geq 0,k\geq 1\},
\end{align*}
there exist $n_1,n_2,\ldots,n_k,m_1,m_2,\ldots,m_k$ such that
$$N:=n_1+m_1+\cdots+n_k+m_k>n,$$
 and
 $$x\in [(ab)^{n_1}(cde)^{m_1}(ab)^{n_2}(cde)^{m_2}\cdots(ab)^{n_k}(cde)^{m_k}].$$
Pick a control sequence $\omega=0^{n_1}1^{m_1}0^{n_2}1^{m_2}\cdots0^{n_k}1^{m_k}230^\infty$.
Then for any
$$y\in [(ab)^{n_1}(cde)^{m_1}(ab)^{n_2}(cde)^{m_2}\cdots(ab)^{n_k}(cde)^{m_k}],$$
we have
$$\phi([0,N],y,\omega)\subset Q, \phi(N+1,y,\omega)=b^\infty \text{~and~} \phi([N+2,\infty),y,\omega)=(ab)^\infty\in Q.$$
So
\begin{equation*}
\frac1{n}\sum_{i=0}^{n-1}d[\phi(i,y,\omega),Q]=\begin{cases}
0, & \text {if $n\leq N+1,$}\\
\frac1{n}d[\phi(N+1,y,\omega),Q]=\frac1{n}d(b^\infty,Q)\leq \frac1{N}<\epsilon, & \text {if $n\geq N+2.$}
\end{cases}
\end{equation*}
Thus, the point $x$ is equi-invariant in the mean and by the arbitrary of $x$,
 we have $Q$ is equi-invariant in the mean.
\end{proof}

Applying Theorem~\ref{thm:L-stable-equ-inv},
we provide an example which is  mean equi-invariant but not finitely equi-invariant in the mean.

\begin{exam}[MEI but not EIM; FMEI but not FEIM]\label{Exa:MEI-n-FEIM}
Consider a control system of form ~(\ref{eq:control-system-modle}), where
\begin{enumerate}
\item  $X=[0,1]=\{x\in\mathbb{R}:0\leq x \leq 1\}$;
\item  $U=\{0,1\}$;
\item  $F_0$ and $F_1: X\rightarrow X$ are defined by
\begin{equation*}
F_0(x)=
\begin{cases}
\frac{1}{2}, & \text {if $0\leq x<\frac{1}{4},$}\\
2(x-\frac{1}{2})+1, & \text {if $\frac{1}{4}\leq x<\frac{1}{2},$}\\
1, & \text {if $\frac{1}{2}\leq x\leq 1,$}\\
\end{cases}
\end{equation*}
and
\begin{equation*}
F_1(x)=
\begin{cases}
12(x-\frac{1}{4})^2+\frac{1}{4}, & \text {if $0\leq x<\frac{1}{4},$}\\
(x-\frac{1}{4})^2+\frac{1}{4}, & \text {if $\frac{1}{4}\leq x\leq 1.$}\\
\end{cases}
\end{equation*}
\end{enumerate}
Let $Q=[0,\frac{1}{4}]$.
Then the set $Q$ is  mean equi-invariant but not finitely equi-invariant in the mean.
\end{exam}

\begin{figure}[H]
  \centering
\begin{tikzpicture}[line width=1pt,line cap=round,line join=round,x=1.0cm,y=1.0cm,scale=0.618,declare function={
    f(\x)=1.5*(\x-2)^2+2;
    g(\x)=0.125*(\x-2)^2+2;
    }]
\clip(-1.5,-1.5) rectangle (9,9);
\draw (0,0)node[below]{$0$}--(2,0)node[below]{$\frac14$}
--(4,0)node[below]{$\frac12$}--(6,0)node[below]{$\frac34$}--(8,0)node[below]{$1$}--(8,8)--(0,8)--cycle;
\draw (0,4)--(2,4)--(4,8)--(8,8);
\draw[domain=0:2,color=red] plot (\x,{f(\x)});
\draw[domain=2:8,color=red] plot (\x,{g(\x)});
\draw[dashed] (0,0)--(8,8);
\draw[line width=2pt,color=blue] (0,0)--(2,0);
\end{tikzpicture}
  \caption{Mean equi-invariant}\label{M-E-I}
\end{figure}
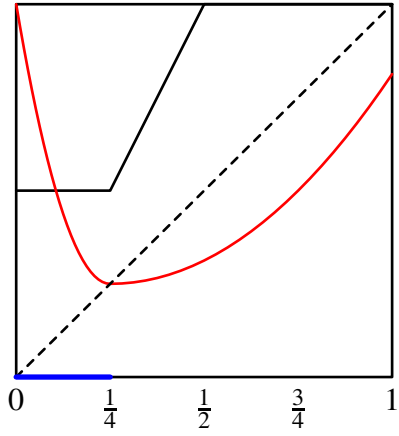

\begin{proof}
By definition of $F_1$, there hold
\begin{itemize}
\item[(a)]  $F_1(x)>F^2_1(x)>F^3_1(x)>\cdots>F^n_1(x)>\cdots$ and $\lim_{n\rightarrow\infty} F^n_1(x)=\frac{1}{4}$ for any $x\in [0,\frac{1}{4})$;
\item[(b)]  $F_1(x)>F_1(y)$ and $F_1^n(x)<F_1^n(y)$ for any $0\leq x<y\leq\frac{1}{4}$ and $n>1$.
\end{itemize}
Next, we divide our proof into two claims.
\begin{clm}\label{claim:2-17}
$Q$ is  mean equi-invariant.
\end{clm}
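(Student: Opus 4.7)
The plan is to exhibit a single control $\omega = 1^{\infty}$ (the constant sequence of $1$'s) that witnesses mean equi-invariance at every point of $Q$ simultaneously. Concretely, for an arbitrary $x \in Q$ and $\varepsilon > 0$, I would take $\omega = 1^{\infty}$ together with any $\delta > 0$, and verify the defining inequality for every $y \in B(x, \delta) \cap Q$. Since the witness $\delta$ plays no quantitative role here, the key content is entirely dynamical.

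The technical core reduces to showing $F_1^{n}(y) \to \frac{1}{4}$ for every $y \in Q = [0, \frac{1}{4}]$. If $y = \frac{1}{4}$, direct computation gives $F_1(y) = \frac{1}{4}$, so the orbit is constantly $\frac{1}{4}$. If $y \in [0, \frac{1}{4})$, the first branch of $F_1$ yields $F_1(y) = 12(y - \frac{1}{4})^{2} + \frac{1}{4} \in (\frac{1}{4}, 1]$, whence subsequent iterates fall into the regime where the second branch applies. Property (a) recorded just before the claim then furnishes that $\{F_1^{n}(y)\}_{n \geq 1}$ is strictly decreasing and converges to $\frac{1}{4}$; one can also verify this directly by noting that $F_1(z) - \frac{1}{4} = (z - \frac{1}{4})^{2}$ on $[\frac{1}{4}, 1]$, making $\frac{1}{4}$ the unique attracting fixed point.

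With pointwise convergence in hand, one has $d(\phi(n, y, \omega), Q) = \max\{F_1^{n}(y) - \frac{1}{4},\, 0\} \to 0$ for every $y \in Q$. The standard Ces\`aro average lemma (if $a_n \to 0$ then $\frac{1}{n} \sum_{i=0}^{n-1} a_i \to 0$) then yields
\[
\limsup_{n \to \infty} \frac{1}{n} \sum_{i=0}^{n-1} d(\phi(i, y, \omega), Q) \;=\; 0 \;<\; \varepsilon,
\]
which is exactly the condition for $x$ to be a mean equi-invariant point of $Q$ with witness $(\delta, \omega)$. Since $x \in Q$ was arbitrary, $\mbox{MEI}(Q) = Q$.

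There is no substantive obstacle for this claim: the argument is a direct consequence of the monotone attraction of $F_1$ to its fixed point $\frac{1}{4}$, encoded in property (a), combined with Ces\`aro averaging. The genuinely nontrivial content of this example lies in the complementary claim that $Q$ is \emph{not} finitely equi-invariant in the mean, where one must preclude \emph{any} finite family of controls from uniformly keeping nearby orbits close to $Q$ in the mean; there, the slow initial excursion caused by $F_1$ sending points near $0$ close to $1$ will be exploited against any fixed finite $F \subset \mathscr{U}$.
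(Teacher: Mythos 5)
Your proof is correct and takes essentially the same route as the paper: both fix the single control $\omega=1^\infty$ and exploit the monotone attraction of $F_1$ to its fixed point $\tfrac14$ on $[\tfrac14,1]$, so that $d(\phi(n,y,\omega),Q)\to 0$ for every $y\in Q$. The only cosmetic difference is that you conclude via the Ces\`aro averaging lemma (getting $\limsup=0$), while the paper carries out the equivalent explicit splitting of the average into the first $N'+1$ terms and a small tail.
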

\begin{proof}[Proof of Claim~\ref{claim:2-17}]
Fix any $x\in Q$ and  $0<\epsilon<\frac{1}{4}$.
Since $\lim_{n\rightarrow\infty} F^n_1(0)=\frac{1}{4}$,
there exists $N'>0$ such that $d(F^n_1(x),Q)<\frac{\epsilon}{2}$ for all $n\geq N'$.
Choose a positive integer $N$ with $\frac{N'+1}{N}<\frac{\epsilon}{2}$, a control sequence $\omega=1^\infty$
and a positive real number $\delta=\epsilon$.
Then for all $n> N$,
\begin{align*}
\frac1{n}\sum_{i=0}^{n-1}d(\phi(i,y,\omega),Q)
&=\frac1{n}\sum_{i=0}^{N'}d(\phi(i,y,\omega),Q)+\frac1{n}\sum_{i=N'+1}^{n-1}d(\phi(i,y,\omega),Q)\\
&\leq\frac1{N}\sum_{i=0}^{N'}d(\phi(i,y,\omega),Q)+\frac1{n}\sum_{i=N'+1}^{n-1}d(\phi(i,y,\omega),Q)\\
&\leq\frac{N'+1}{N}+\frac{n-N'-1}{n}\cdot\frac{\epsilon}{2}\\
&<\frac{\epsilon}{2}+\frac{\epsilon}{2}=\epsilon,
\end{align*}
which implies that
\[\limsup_{n\rightarrow\infty}\frac1{n}\sum_{i=0}^{n-1}d(\phi(i,y,\omega),Q)\leq\epsilon.\]
Thus every point in $Q$ is finitely mean equi-invariant.
\end{proof}

\begin{clm}\label{claim:2-18}
$Q$ is not finitely  equi-invariant in the mean.
\end{clm}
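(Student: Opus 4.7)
The plan is to show that the boundary point $y_0 = 0 \in Q$ cannot belong to $\mbox{FEIM}(Q)$, which immediately yields $\mbox{FEIM}(Q) \neq Q$ and hence the claim. The main observation is that, no matter which single control symbol is applied first, the state $0$ is pushed a macroscopic distance from $Q$ in one step, so the Ces\`aro average at time $n = 2$ is bounded below by a universal constant independent of the control.

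Concretely, from the piecewise definitions one has $F_0(0) = 1/2$ (the branch $0 \le x < 1/4$ applies) and $F_1(0) = 12 \cdot (1/4)^2 + 1/4 = 1$, so $d(F_0(0), Q) = 1/4$ and $d(F_1(0), Q) = 3/4$. In particular, for every $\omega \in \mathscr{U}$ one has $d(\phi(1, 0, \omega), Q) \ge 1/4$, and combining with $d(\phi(0, 0, \omega), Q) = d(0, Q) = 0$ yields
\[
\frac{1}{2}\sum_{i=0}^{1} d[\phi(i, 0, \omega), Q] \;\ge\; \frac{1}{8}.
\]
Thus the Ces\`aro mean distance at $n = 2$ is at least $1/8$ for every choice of control.

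Fix now $\varepsilon_0 := 1/10 < 1/8$. For any $\delta > 0$ and any finite set $F \subset \mathscr{U}$, take the witness $y = 0 \in B(0, \delta) \cap Q$: the previous inequality shows that every $\omega \in F$ violates $\tfrac{1}{n}\sum_{i=0}^{n-1} d[\phi(i, y, \omega), Q] < \varepsilon_0$ already at $n = 2$. Hence the definition of finite equi-invariance in the mean cannot be satisfied at $x = 0$ with this $\varepsilon_0$, so $0 \notin \mbox{FEIM}(Q)$ and $Q$ is not finitely equi-invariant in the mean.

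There is essentially no obstacle in this plan: the entire argument is driven by the fact that both $F_0$ and $F_1$ eject the specific point $0$ by a distance $\ge 1/4$ in a single step, so the finiteness of $F$ plays no role and the time-$2$ Ces\`aro average provides the required lower bound uniformly in $\omega$. The only thing that needs attention is to select the correct branch of each piecewise definition at $x = 0$, which is immediate. Note that this is consistent with Claim~\ref{claim:2-17}: the \emph{limsup} average still tends to $0$ along $\omega = 1^\infty$ because $F_1^n(0) \to 1/4$, but the \emph{sup-over-$n$} average is forced to stay above $1/8$ by the single large jump at the first step.
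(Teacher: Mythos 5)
Your proof is correct and follows essentially the same idea as the paper: both maps eject points near $0$ out of $Q$ by at least $1/4$ in one step, so the time-$2$ Ces\`aro average is at least $1/8$ for every control, making the finiteness of $F$ irrelevant. The only (harmless) difference is that you use the single witness $y=0$, which lies in every ball $B(0,\delta)\cap Q$, whereas the paper proves the same lower bound for all $y$ in a small ball of radius $\frac{\sqrt{3}-1}{4\sqrt{3}}$; your shortcut avoids that computation entirely.
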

\begin{proof}[Proof of Claim~\ref{claim:2-18}]
It suffices to show that $0$ is not finitely  equi-invariant in the mean;
that is there exists a positive real number $\epsilon>0$ such that for any
$\delta>0$, control sequences $\omega^{(1)},\ldots, \omega^{(k)}\in\mathscr{U}$ and $y\in B(0,\delta)\cap [0,\frac{1}{4}]$,  one can find some control sequence $\omega^{(r)}$, $1\leq r\leq k$,
satisfies that
\[\frac1{n}\sum_{i=0}^{n-1}d[\phi(i,y,\omega^{(r)}),Q]\geq\varepsilon,\]
for some $n$.
Indeed, for any $0<\delta<\frac{\sqrt{3}-1}{4\sqrt{3}}$, control sequence
$\omega\in\mathscr{U}$ and $y\in B(0,\delta)\cap [0,\frac{1}{4}]$, there holds
\begin{align*}
&\frac{1}{2}\Large(d[\phi(0,y,\omega),Q]+d[\phi(1,y,\omega),Q]\Large)\\
\geq& \frac{1}{2}d[\phi(1,y,\omega),Q]\\
=&\frac{1}{2}d[F_0(y),Q]\\
=&\frac{1}{2}
\end{align*}
whenever $\omega_0=0$;
and
\begin{align*}
&\frac{1}{2}\Large(d[\phi(0,y,\omega),Q]+d[\phi(1,y,\omega),Q]\Large)\\
\geq& \frac{1}{2}d[\phi(1,y,\omega),Q]\\
=&\frac{1}{2}d[F_1(y),Q]\\
\geq&\frac{1}{2}
\end{align*}
whenever $\omega_0=1$.
\end{proof}
\end{proof}

Next, we provide an example which is finitely mean equi-invariant but not mean equi-invariant.

\begin{exam}[FMEI but not MEI]\label{Exa:FMEI-n-MEI}
Consider a control system of form ~(\ref{eq:control-system-modle}), where
\begin{enumerate}
\item  $X=[0,1]=\{x\in\mathbb{R}:0\leq x \leq 1\}$;
\item  $U=\{0,1\}$;
\item  $ F_0$, and $F_1: X\rightarrow X$ are defined by
\begin{equation*}
F_0(x)=
\begin{cases}
\frac{1}{8}, & \text {if $0\leq x<\frac{1}{4},$}\\
2x-\frac38, & \text {if $\frac{1}{4}\leq x<\frac{3}{8},$}\\
(x-\frac38)^2+\frac38, & \text {if $\frac{3}{8}\leq x\leq1,$}\\
\end{cases}
\end{equation*}
and
\begin{equation*}
F_1(x)=
\begin{cases}
0, & \text {if $0\leq x<\frac{1}{16},$}\\
2x-\frac18, & \text {if $\frac{1}{16}\leq x<\frac{1}{8},$}\\
\frac14(x-\frac18)^{\frac{1}{3}}+\frac18, & \text {if $\frac{1}{8}\leq x<\frac14,$}\\
-x+\frac1{2}, & \text {if $\frac{1}{4}\leq x<\frac12,$}\\
0, & \text {if $\frac12\leq x\leq1.$}\\
\end{cases}
\end{equation*}
\end{enumerate}
Let $Q=[\frac14,\frac12]$.
Then the set $Q$ is finitely mean equi-invariant but not mean equi-invariant.
\end{exam}

\begin{figure}[H]
  \centering
\begin{tikzpicture}[line width=1pt,line cap=round,line join=round,x=1.0cm,y=1.0cm,scale=0.618,declare function={
    f(\x)=(\x-3)^2/8+3;
    g(\x)=(\x-1)^(1/3)+1;
    }]
\clip(-1.5,-1.5) rectangle (9,9);
\draw (0,0)node[below]{$0$}--(1,0)node[below]{$\frac18$}
--(2,0)node[below]{$\frac14$}--(3,0)node[below]{$\frac38$}--
(4,0)node[below]{$\frac12$}--(8,0)node[below]{$1$}--(8,8)--(0,8)--(0,1)node[left]{$\frac18$}--cycle;
\draw[dashed] (0,0)--(8,8);
\draw[domain=3:8,color=red] plot (\x,{f(\x)});
\draw[domain=1:2,color=green] plot (\x,{g(\x)});
\draw[gray!50!red] (2,1)--(3,3) (0,1)--(2,1);
\draw[green] (2,2)--(4,0)--(8,0) (1,1)--(0.5,0)--(0,0);

\draw[line width=2pt,color=blue] (2,0)--(4,0);

\end{tikzpicture}
  \caption{Finitely mean equi-invariant}\label{F-M-E-I}
\end{figure}

\begin{proof}
By definitions of $F_0$ and $F_1$, we have the following properties:
\begin{itemize}
\item[(a)]  $F_0(x)\geq F^2_0(x)\geq F^3_0(x)\geq\cdots\geq F^n_0(x)\geq\cdots$,  for any $x\in [\frac{1}{4},\frac{3}{8})\cup (\frac{3}{8},\frac{1}{2}] $;
\item[(b)]   $\lim_{n\rightarrow\infty} F^n_0(x)=\frac{1}{8}$
for any $x\in [\frac{1}{4},\frac{3}{8})$ and $\lim_{n\rightarrow\infty} F^n_0(x)=\frac{3}{8}$ for any $x\in (\frac{3}{8},\frac{1}{2}] $;
\item[(c)]  $F_0^n(x)\leq F_0^n(y)$ for any $\frac{1}{4}\leq x\leq y\leq\frac{1}{2}$ and $n\geq 0$,
$F_1^n(x)\leq F_1^n(y)$ for any $\frac{1}{8}\leq x\leq y\leq\frac{1}{4}$ and $n\geq 0$;
\item[(d)] $F_1(x)< F^2_1(x)<F^3_1(x)<\cdots<F^n_1(x)<\cdots$ and $\lim_{n\rightarrow\infty} F^n_1(x)=\frac{1}{4}$ for any $x\in (\frac{1}{8},\frac{1}{4})$, and $F_1(\frac{1}{4})=\frac{1}{4}$;
 \item[(e)]   $F_1(x)\geq F^2_1(x)\geq F^3_1(x)\geq\cdots\geq F^n_1(x)\geq\cdots$ and  $\lim_{n\rightarrow\infty} F^n_1(x)=0$ for any $x\in (\frac{3}{8},\frac{1}{2}] $;
 \item[(f)]   $F_0(x),F_1(x)\in [0,\frac{1}{8}]$ for all $x\in[0,\frac1{8}]$.
\end{itemize}

Next, we divide our proof into three claims.

\begin{clm}\label{claim:3-1}
 Every point in $Q\backslash \{\frac{3}{8}\}$ is  mean equi-invariant.
\end{clm}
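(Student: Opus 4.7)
The plan is to split $Q \setminus \{\frac{3}{8}\}$ into the two subintervals $[\frac{1}{4}, \frac{3}{8})$ and $(\frac{3}{8}, \frac{1}{2}]$ and, in each case, to exhibit a single constant control that stabilizes a small neighborhood of $x$ in $Q$ in the mean sense. Since $\mbox{MEI}(Q)$ already asks for just one $\omega \in \mathscr{U}$, this is what the definition requires.

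For $x \in [\frac{1}{4}, \frac{3}{8})$, I would first pick $\delta \in (0, \frac{3}{8} - x)$ so that $B(x, \delta) \cap Q \subset [\frac{1}{4}, \frac{3}{8})$, and take $\omega = 1^\infty$. The explicit piece $F_1(z) = -z + \frac{1}{2}$ on $[\frac{1}{4}, \frac{1}{2})$ gives $F_1(\frac{1}{4}) = \frac{1}{4}$ and $F_1(y) \in (\frac{1}{8}, \frac{1}{4})$ for $y \in (\frac{1}{4}, \frac{3}{8})$. Combined with property (d) listed before the claim, this yields $\phi(n, y, \omega) = F_1^n(y) \to \frac{1}{4} \in Q$ for every $y \in B(x, \delta) \cap Q$. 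Hence $d(\phi(n, y, \omega), Q) \to 0$, so its time average tends to zero and the $\limsup$ in the definition of $\mbox{MEI}(Q)$ is $0 < \varepsilon$ for every $\varepsilon > 0$.

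For $x \in (\frac{3}{8}, \frac{1}{2}]$, I would pick $\delta \in (0, x - \frac{3}{8})$ so that $B(x, \delta) \cap Q \subset (\frac{3}{8}, \frac{1}{2}]$, and take $\omega = 0^\infty$. On $[\frac{3}{8}, 1]$ we have $F_0(z) = (z - \frac{3}{8})^2 + \frac{3}{8}$, with $F_0(\frac{3}{8}) = \frac{3}{8}$ and $F_0(\frac{1}{2}) = \frac{25}{64} < \frac{1}{2}$; since $F_0$ is monotone on this branch, $F_0([\frac{3}{8}, \frac{1}{2}]) \subset [\frac{3}{8}, \frac{1}{2}] \subset Q$. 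Therefore $\phi(n, y, \omega) \in Q$ for all $n \geq 0$ and all $y \in B(x, \delta) \cap Q$, so $d(\phi(n, y, \omega), Q) = 0$ identically and the time average is identically zero.

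No genuinely difficult step arises; the verification is a direct computation using the explicit formulas of $F_0$ and $F_1$ together with the previously listed monotonicity and convergence properties (a)--(f). Note also that uniformity in $y$ is not required here because $\mbox{MEI}(Q)$ only demands a pointwise $\limsup$ bound on the ball. The same computation explains why $\frac{3}{8}$ must be excluded: a neighborhood of $\frac{3}{8}$ in $Q$ meets both sides, the control $0^\infty$ drags the left side down to the fixed point $\frac{1}{8} \notin Q$, while $1^\infty$ sends the right side into $[0, \frac{1}{8})$, which converges to $0 \notin Q$, so no constant control stabilizes a neighborhood of $\frac{3}{8}$ and the case split around it is forced.
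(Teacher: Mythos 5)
Your proof is correct and follows essentially the same route as the paper: the same split of $Q\setminus\{\tfrac38\}$ into $[\tfrac14,\tfrac38)$ and $(\tfrac38,\tfrac12]$, the same constant controls $1^\infty$ and $0^\infty$ respectively, and the same use of the monotonicity/convergence properties (the paper invokes (a)--(d) where you compute directly from the formulas). Your explicit choice of $\delta$ confining the ball to one subinterval just makes the neighborhood step the paper leaves implicit a bit more transparent.
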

\begin{proof}[Proof of Claim~\ref{claim:3-1}]
Case 1.  $x\in (\frac{3}{8},\frac{1}{2}]$.
Take $\omega=0^\infty$. Then,
 by Properties (a), (b) and (c) above,
we have
$d[\phi(n,x,\omega),Q]=0$
for all $n\geq 0$.
It follows that $x$ is mean equi-invariant.

Case 2.  $x\in [\frac{1}{4},\frac{3}{8})$.
Then $\frac{1}{8}<F_1(x)\leq\frac{1}{4}$.
Take $\omega=1^\infty$. Then for any $\epsilon>0$,
 by Property (d) above,
we have
\[\limsup_{n\rightarrow\infty}\frac1{n}\sum_{i=0}^{n-1}d[\phi(i,x,\omega),Q]<\epsilon.\]
According to Property (c) above, we have $x$ is mean equi-invariant.
\end{proof}

\begin{clm}\label{claim:3-2}
The point  $\frac{3}{8}$ is  finitely mean equi-invariant.
\end{clm}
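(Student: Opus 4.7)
The plan is to verify $\tfrac{3}{8} \in \mbox{FMEI}(Q)$ with the two-element control set $F = \{0^\infty, 1^\infty\}$. Neither constant alone suffices at $\tfrac{3}{8}$: the control $0^\infty$ fixes $\tfrac{3}{8}$ and keeps orbits starting from the right inside $Q$, but on $[1/4,3/8)$ the map $F_0(y) = 2y - 3/8$ drives points $y$ slightly below $\tfrac{3}{8}$ out of $Q$ and eventually into the absorbing value $\tfrac{1}{8}$; conversely $1^\infty$ handles points below $\tfrac{3}{8}$ (they fall into the basin of the attracting fixed point $\tfrac{1}{4}$ of $F_1$), but $F_1(3/8) = 1/8$ traps $\tfrac{3}{8}$ itself at constant distance $\tfrac{1}{8}$ from $Q$. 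Since the two obstructions live on disjoint sides of $\tfrac{3}{8}$, a finite set of two constant controls should suffice, and I will split a small neighborhood accordingly.

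Fix any $\varepsilon > 0$ and choose $\delta \in (0, 1/8)$. Decompose $B(3/8, \delta) \cap Q$ as $[3/8 - \delta, 3/8) \cup [3/8, 3/8 + \delta]$. For $y$ in the right half, I assign $\omega = 0^\infty$: from $F_0(y) = (y - 3/8)^2 + 3/8$ on $[3/8, 1]$ one has $F_0(y) \in [3/8, y]$, so together with Property (a) the iterates $F_0^n(y)$ form a decreasing sequence inside $Q$ with limit $\tfrac{3}{8}$ (by Property (b)). Hence $d(\phi(i, y, 0^\infty), Q) \equiv 0$ and the limsup of Cesaro averages vanishes identically.

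For $y$ in the left half, I assign $\omega = 1^\infty$. A direct computation gives $F_1(y) = 1/2 - y \in (1/8, 1/8 + \delta] \subset (1/8, 1/4)$ (using $\delta < 1/8$), so Property (d) yields $F_1^{n+1}(y) = F_1^n(F_1(y)) \to 1/4 \in Q$ monotonically, whence $d(\phi(i, y, 1^\infty), Q) \to 0$ as $i \to \infty$. Since these distances are uniformly bounded by $\tfrac{1}{8}$, the standard Cesaro averaging lemma gives $\lim_{n \to \infty} \tfrac{1}{n} \sum_{i=0}^{n-1} d(\phi(i, y, 1^\infty), Q) = 0$. Combining the two halves, every $y \in B(3/8, \delta) \cap Q$ admits some $\omega \in F$ with $\limsup_{n \to \infty} \tfrac{1}{n} \sum_{i=0}^{n-1} d(\phi(i, y, \omega), Q) = 0 < \varepsilon$, establishing $\tfrac{3}{8} \in \mbox{FMEI}(Q)$. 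The only step beyond direct bookkeeping from the already established properties is the Cesaro conclusion in the left-half case, which is immediate from $d_i \to 0$ with $d_i$ uniformly bounded.
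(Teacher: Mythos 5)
Your proof is correct and follows essentially the same route as the paper: the paper derives the claim directly from the two cases in the proof of Claim 4.5's first claim (control $0^\infty$ for points in $[\tfrac38,\tfrac38+\delta]$, control $1^\infty$ for points in $[\tfrac38-\delta,\tfrac38)$), which is exactly your split of $B(\tfrac38,\delta)\cap Q$ with the finite family $F=\{0^\infty,1^\infty\}$; you merely spell out the details the paper leaves implicit.
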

\begin{proof}[Proof of Claim~\ref{claim:3-2}]
It comes directly from the proof of Claim~\ref{claim:3-1}.
\end{proof}

\begin{clm}\label{claim:3-3}
The point  $\frac{3}{8}$ is  not mean equi-invariant.
\end{clm}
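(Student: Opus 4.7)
The plan is to negate mean equi-invariance at $\frac{3}{8}$ by fixing a uniform constant $\varepsilon_0 > 0$ and showing that, for every $\delta > 0$ and every \emph{single} control $\omega \in \mathscr{U}$, one can exhibit a point $y \in B(\frac{3}{8},\delta)\cap Q$ whose orbit under $\omega$ has Ces\`aro-averaged distance to $Q$ bounded below by $\varepsilon_0$ in the $\limsup$. The backbone of the argument is a pair of structural observations drawn directly from the piecewise definitions: $F_0(\frac{3}{8}) = \frac{3}{8}$ while $F_1(\frac{3}{8}) = \frac{1}{8}$, and $\frac{1}{8}$ is a common fixed point of $F_0$ and $F_1$ sitting at distance $\frac{1}{8}$ from $Q$. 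I will take $\varepsilon_0 = \frac{1}{16}$.

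I then split the argument according to whether $\omega$ ever takes the value $1$. First, if $\omega = 0^\infty$, I will pick $y = \frac{3}{8} - \eta$ for some $\eta \le \min\{\delta/2, 1/16\}$, so that $y \in (\frac{1}{4},\frac{3}{8})\cap B(\frac{3}{8},\delta)$. On $[\frac{1}{4},\frac{3}{8})$ the map $F_0$ acts as the affine expansion $x\mapsto 2x - \frac{3}{8}$, and induction yields $F_0^k(y) = \frac{3}{8} - 2^k\eta$ as long as the orbit remains in that branch. After finitely many steps the iterates drop below $\frac{1}{4}$, where $F_0$ immediately collapses the orbit to $\frac{1}{8}$ and pins it there forever. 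Consequently $d[\phi(n,y,\omega),Q]$ is eventually equal to $\frac{1}{8}$, so the Ces\`aro average converges to $\frac{1}{8} > \varepsilon_0$.

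Second, if $\omega$ contains at least one $1$, with $k_0 := \min\{k : \omega_k = 1\}$, I will take $y = \frac{3}{8}$ itself. A short induction (using $F_0(\frac{3}{8}) = \frac{3}{8}$ for the first $k_0$ steps, then $F_1(\frac{3}{8}) = \frac{1}{8}$, and finally $F_0(\frac{1}{8}) = F_1(\frac{1}{8}) = \frac{1}{8}$ thereafter) shows that $\phi(n,\frac{3}{8},\omega) = \frac{3}{8}$ for $n\le k_0$ and $\phi(n,\frac{3}{8},\omega) = \frac{1}{8}$ for every $n > k_0$. Hence the distance sequence is eventually constantly $\frac{1}{8}$ and the Ces\`aro average again tends to $\frac{1}{8}$.

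No step presents a genuine obstacle; the only subtle point is to recognize that two complementary candidate points — namely $\frac{3}{8} - \eta$ to the left, which blows up under the pure-$F_0$ dynamics, and $\frac{3}{8}$ itself, which collapses to $\frac{1}{8}$ as soon as a $1$ is applied — together defeat every possible single control. Since these two cases exhaust $\mathscr{U}$, this shows $\frac{3}{8} \notin \mbox{MEI}(Q)$, as desired.
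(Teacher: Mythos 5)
Your proof is correct and follows essentially the same strategy as the paper: split on whether the single control is $0^\infty$ (defeated by points just to the left of $\tfrac38$, whose $F_0$-orbit escapes the doubling branch and gets pinned at the common fixed point $\tfrac18$, at distance $\tfrac18$ from $Q$) or contains a symbol $1$ (defeated because applying $F_1$ sends the orbit into $[0,\tfrac18]$, which is forward invariant). The only cosmetic difference is that you use the witness $y=\tfrac38$ itself (via $F_1(\tfrac38)=\tfrac18$) in the second case, whereas the paper uses points in $(\tfrac38,\tfrac12]$ together with the invariance of $[0,\tfrac18]$; both yield the same lower bound on the Ces\`aro averages.
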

\begin{proof}[Proof of Claim~\ref{claim:3-3}]
Suppose in contrast that $\frac{3}{8}$ is  mean equi-invariant;
that is for any $\epsilon>0$ there exist $\delta>0$ and $\omega\in\mathscr{U}$ such that
\[\limsup_{n\rightarrow\infty}\frac1{n}\sum_{i=0}^{n-1}d[\phi(i,y,\omega),Q]<\epsilon\]
for all $y\in(\frac3{8}-\delta,\frac3{8}+\delta)$.
Indeed, on one hand, for $x\in (\frac{3}{8},\frac{1}{2}]$,
let $\omega\in\mathscr{U}$.
If $\omega\in\{1\omega':\omega'\in\mathscr{U}\}$,
then $\phi(1,x,\omega)=F_1(x)\in[0,\frac1{8}]$ and $\phi(n,x,\omega)\in[0,\frac1{8}]$ for all $n>1$ by Property (f) above.
This implies that
\[\limsup_{n\rightarrow\infty}\frac1{n}\sum_{i=0}^{n-1}d[\phi(i,y,\omega),Q]\geq\frac1{8}.\]
If $\omega=0^n1\omega'$ for some $n\geq 1$ and $\omega'\in\mathscr{U}$,
then $\phi(n+1,x,\omega)=F_1(x)\in[0,\frac1{8}]$ and consequently
\[\limsup_{n\rightarrow\infty}\frac1{n}\sum_{i=0}^{n-1}d[\phi(i,y,\omega),Q]\geq\frac1{8}.\]
Since
\[\mathscr{U}=\{0^\infty\}\cup\{1\omega':\omega'\in\mathscr{U}\}\cup\{0^n1\omega':n\geq 1\text{~and~}\omega'\in\mathscr{U}\},\]
by the proof of Claim~\ref{claim:3-1},
 there exists only one control sequence $\omega=0^\infty$ such that $$\limsup_{n\rightarrow\infty}\frac1{n}\sum_{i=0}^{n-1}d[\phi(i,y,\omega),Q]<\frac1{8}.$$
On the other hand, if $x\in [\frac{1}{4},\frac{3}{8})$.
 Take $\omega=0^\infty$.
 By Property (b), there exists $N>0$ such that
 $$d[\phi(n,y,\omega),\frac1{4}]=d[\phi(n,y,\omega),Q]\geq\frac1{16}$$ for all $n\geq N$.
 Thus
 \begin{align*}
\limsup_{n\rightarrow\infty,n>N}\frac1{n}\sum_{i=0}^{n-1}d[\phi(i,x,\omega),Q]
=&\limsup_{n\rightarrow\infty,n>N}\frac1{n}\LARGE(\sum_{i=0}^{N-1}d[\phi(i,x,\omega),Q]+\sum_{i=N}^{n-1}d[\phi(i,x,\omega),Q]\LARGE)\\
\geq&\limsup_{n\rightarrow\infty,n>N}\frac1{n}\sum_{i=N}^{n-1}d[\phi(i,\phi(m,x,\omega),\omega),Q])\\
\geq&\frac1{16}.
\end{align*}

This is a contradiction. So $\frac{3}{8}$ is  not mean equi-invariant.
\end{proof}
By Claims~\ref{claim:3-1} and \ref{claim:3-2}, the set $Q$ is finitely mean equi-invariant.
By Claim~\ref{claim:3-3}, it is not mean equi-invariant.
\end{proof}


\section*{Acknowledgements}
The authors thank Professor Jian Li for sharing his research on dynamical systems.
The first and third authors were supported by National Nature Science Funds of China (11771459) and the first author was also supported by Research Funds of Guangdong University of Foreign Studies(299-X5218165 and 299-X5219222);
the second author was supported by the National Natural Science Foundation of China
(Nos.11701584 and 11871228) and the Natural Science Research Project of Guangdong Province (Grant No.2018KTSCX122).

\end{document}